\numberwithin{equation}{section}
\newcommand{\C}{\mathbf{C}}
\newcommand{\tr}{\mathrm{tr}}
\newcommand{\dist}{\mathrm{dist}}
\newcommand{\e}{\mathrm{e}}
\newcommand{\HS}{\mathrm{HS}}
\newcommand{\mO}{\mathcal{O}}
\newtheorem{dref}{Definition}[section] \newtheorem{lemma}[dref]{Lemma}
\newtheorem{theo}[dref]{Theorem} \newtheorem{prop}[dref]{Proposition}
\newtheorem{remark}[dref]{Remark} 
\newtheorem{cor}[dref]{Corollary}
\title{Large bi-diagonal matrices and random perturbations}
\author{Johannes Sj\"ostrand}
\address[Johannes Sj\"ostrand]{IMB, 
  Universit\'e de Bourgogne Franche-Comt\'e, 
  UMR 5584 du CNRS, 
  9, avenue Alain Savary - BP 47870 FR-21078 Dijon Cedex.}
\email{johannes.sjostrand@u-bourgogne.fr}
\author{Martin Vogel}
\address[Martin Vogel]{D\'epartement de Math\'ematiques - UMR 8628 CNRS, B\^atiment 440, Universit\'e Paris-Sud, 
15 Rue du Doyen Georges Poitou, F-91405 Orsay Cedex.}
\email{martin.vogel@math.u-psud.fr}
 \date{}
 \keywords{Spectral theory; non-self-adjoint operators; random perturbations}
\subjclass[2010]{47A10, 47B80, 47H40, 47A55}
\begin{document}
\dedicatory{Dedicated to the memory of Yuri Safarov}
\begin{abstract}
This is a first paper by the authors dedicated to the distribution of eigenvalues for 
random perturbations of large bidiagonal Toeplitz matrices. 
  \vskip.5cm
  \par\noindent \textsc{R{\'e}sum{\'e}.} 
Ceci est un premier travail par les auteurs sur la distribution des valeurs propres de 
perturbations al\'eatoires de grandes matrices bidiagonales de Toeplitz.
\end{abstract}
\medskip 
\maketitle
\setcounter{tocdepth}{1}
\tableofcontents

\section{Introduction and main result}\label{int}
\setcounter{equation}{0}
It is well-known that for non-normal operators, as opposed to normal 
operators, the norm of the resolvent can be very large even far away from the spectrum. 
Equivalently, the spectrum of 
such operators can be highly unstable under tiny perturbations. 
Originating from a renewed interest in numerical analysis with the 
works of L.N.~Trefethen and M.~Embree \cite{Tr97,TrEm05}, spectral 
instability of non-self-adjoint operators has become an active subject 
of interest. It is the source of many interesting 
effects, as emphasized by the works of E.B.~Davies, M.~Zworski, J.~Sj\"ostrand 
and many others (cf. \cite{Da97,Da99,NSjZw04,ZwChrist10,DaHa09}). 
\par
It is natural to study the effects of small random perturbations on the 
spectra of non-normal operators. A recent series of works by M.~Hager, 
W.~Bordeaux-Montrieux, J.~Sj\"ostrand and M.~Vogel 
\cite{BM,Ha06,Ha06b,HaSj08,SjAX1002,Vo14,Vo14b} has focused on the 
case of elliptic (pseudo-)differential operators subject to small random 
perturbations. It was shown that for a large class of (pseudo-)differential 
operators one obtains a probabilistic Weyl law for the eigenvalues in the 
bulk of the spectrum. 
\par 
Another important example is the case of non-self-adjoint Toeplitz matrices. 
They can arise for example in models of non-hermitian quantum mechanics, cf \cite{GoKh00,HaNe96}. The spectral theory of such operators has been much 
discussed in the past, cf \cite{Wi94,BoSi99}, and from the point of view of spectral 
instability in \cite{TrEm05}.
\par
The simplest example of a truncated Toeplitz operator is the Jordan block matrix. 
M.~Hager and E.B.~Davies \cite{DaHa09} considered the case of large Jordan block matrices subject to small Gaussian random perturbations and showed that with a sufficiently small coupling constant 
most eigenvalues can be found near a circle, with probability close to $1$, as 
the dimension of the matrix $N$ gets large. Furthermore, they give a probabilistic upper bound of order $\log N$ for the number of eigenvalues in the interior of a circle. 
\par
A recent result by A.~Guionnet, P.~Matched Wood and 
O.~Zeitouni \cite{GuMaZe14} implies that when the coupling constant is 
bounded from above and from below by (different) sufficiently negative powers of $N$, then 
the normalized counting measure of eigenvalues of the randomly perturbed Jordan block  converges weakly in probability to the uniform measure on $S^1$ as the dimension of the 
matrix gets large. 
\par
In \cite{Sj15}, J.~Sj\"ostrand 
obtained a probabilistic circular Weyl law for most of the eigenvalues of large Jordan block matrices 
subject to small random perturbations, and in \cite{SjVo15}, we obtained a precise asymptotic formula  
for the average density of the residual eigenvalues in the interior of a circle, where the result of Davies and Hager yielded a logarithmic upper bound on the number of eigenvalues. The  leading term is given 
by the hyperbolic volume form on the unit disk, independent of the dimension $N$. 
\\
\par
The goal of the present work is to study the spectrum of random perturbations of the
following bidiagonal $N\times N$  Toeplitz matrix:
\begin{itemize}
\item[Case I]
\begin{equation}\label{int.1}
P=P_\mathrm{I}=\begin{pmatrix} 0 &a &0 &.. &.. &0\\
b &0 &a &.. &..&0\\
0 &b &0 &.. &..&0\\
.. &.. &.. &..&..&..\\
0 & ..&.. &..&0 &a\\
0 &0 &.. &.. &b &0 \end{pmatrix}
\end{equation}
\end{itemize}
Originally we also wanted to include
\begin{itemize}
\item[Case II]
\begin{equation}\label{int.2}
P=P_\mathrm{II}=
\begin{pmatrix} 0 &a &b &0 &.. &.. &0\\
0 &0 &a &b &.. &..&0\\
0 &0 &0 &a &.. &..&0\\
.. &.. &.. &.. &.. &..&..\\
.. &.. &.. &.. &.. &a&b\\
.. & ..&.. &.. &..&0 &a\\
..&..&..&.. &..&0&0
\end{pmatrix},
\end{equation}
but we decided to postpone much of the study in this case. 
\end{itemize}
Here $a,\, b\in {\bf C}\setminus \{ 0 \}$ and $N\gg 1$. Identifying ${\bf C}^N$
with $\ell^2([1,N])$, $[1,N]=\{ 1,2,..,N\}$ and also with $\ell^2_{[1,N]}({\bf
  Z})$ (the space of all $u\in \ell^2({\bf Z})$ with support in
$[1,N]$), we have:
\begin{equation}\label{int.3}
P_\mathrm{I}=1_{[1,N]}(a\tau _{-1}+b\tau _1)=1_{[1,N]}(a\e^{iD_x}+b\e^{-iD_x}),
\end{equation}
\begin{equation}\label{int.4}
P_\mathrm{II}=1_{[1,N]}(a\tau _{-1}+b\tau _{-2})=1_{[1,N]}(a\e^{iD_x}+b\e^{2iD_x}),
\end{equation}
where $\tau _ku(j)=u(j-k)$ denotes translation by $k$.
\par The symbols of these operators are by definition,
\begin{equation}\label{int.5}
P_\mathrm{I}(\xi )=a\e^{i\xi }+b\e^{-i\xi },\ P_\mathrm{II}(\xi )=a\e^{i\xi }+b\e^{2i\xi }.
\end{equation}
In this work, we consider the following random perturbation of $P_0=P_\mathrm{I}$
\begin{equation}\label{int.5a}
 P_{\delta} := P_0 + \delta Q_{\omega}, 
 \quad
 Q_{\omega}=(q_{j,k}(\omega))_{1\leq j,k\leq N}, 
\end{equation}
where $0\leq\delta\ll 1 $ and $q_{j,k}(\omega)$ are independent and 
identically distributed complex Gaussian random variables, 
following complex Gaussian law $\mathcal{N}_{\C}(0,1)$. 
\\
The following result shows that, with probability close to $1$, most 
eigenvalues are in a small neighbourhood of the ellipse $E_1=P_\mathrm{I}(S^1)$ 
with focal points $\pm 2\sqrt{ab}$ and major semi-axis of length $|a|+|b|$: 
let $\gamma$ be a segment of $E_1$ and $r>0$, put 
\begin{equation}\label{int.6a}
 \Gamma(r,\gamma) = \{
 z\in\C; ~
 \dist(z,E_1) = \dist(z,\gamma) < r \}.
\end{equation}
\begin{theo}\label{evI2}
Let $P=P_\mathrm{I}$ be the bidiagonal matrix in (\ref{int.1}) where $a,b\in
{\bf C}$ satisfy $0<|b|<|a|$. Let $P_\delta $ be as
in (\ref{int.5a}). Choose $\delta \asymp N^{-\kappa }$, $\kappa >5/2$
 and consider the limit of large $N$. 
Let $\gamma$ be a segment of the ellipse $E_1=P_\mathrm{I}(S^1)$ and let 
$\Gamma=\Gamma (r,\gamma )$ be as in (\ref{int.6a}) with 
$(\ln N)/N\ll r\ll 1$. Let $\delta_0$ be small and fixed.
\par Then with probability 
\begin{equation}\label{evl.41.5}
\geq 1 - \mO(1)\left(\frac{1}{r} + \ln N\right)N^{2\kappa}\e^{-2N^{\delta_0}},
\end{equation}
we have
\begin{equation}\label{evI.41}
\left| \#(\sigma (P_\delta )\cap \Gamma ) -\frac{1}{2\pi
  }\mathrm{vol}_{]0,N]\times S^1} P_\mathrm{I}^{-1}(\Gamma ) \right| \le {\mO}(1)N^{\delta_0} (\frac{1}{r}+\ln N ).
\end{equation}
\end{theo}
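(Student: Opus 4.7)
The plan is to adapt the determinantal counting framework for random perturbations of non-self-adjoint operators developed in \cite{HaSj08,Sj15,SjVo15} to the bidiagonal Toeplitz case. Eigenvalues of $P_\delta$ are zeros, with multiplicity, of $D(z) := \det(P_\delta - z)$, and one uses the Riesz / Green identity
\begin{equation*}
\#(\sigma(P_\delta) \cap \Gamma)
= \frac{1}{2\pi} \int_{\mathbf{C}} (\Delta \chi)(z)\, \log |D(z)|\, L(dz),
\end{equation*}
where $\chi \in C_c^\infty(\mathbf{C})$ smoothly approximates $\mathbf{1}_\Gamma$, is supported in a slightly larger neighbourhood of $\gamma$, and has $\|\nabla \chi\|_\infty = \mathcal{O}(1/r)$. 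The whole task is to estimate $\log|D(z)|$ to precision $N^{\delta_0}$ on $\supp \Delta\chi$: deterministically from above, and with high probability from below.

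For the symbolic side, $|b| < |a|$ implies that for $z$ in a neighbourhood of $E_1$ the equation $P_\mathrm{I}(\xi) = z$ admits two roots $\xi_\pm(z)$ in a complex strip with $\mathrm{Im}\,\xi_+ > 0 > \mathrm{Im}\,\xi_-$, and the WKB plane waves $e_\pm(z) = (\e^{ij\xi_\pm(z)})_{j=1}^N$ are approximate singular vectors of $P_\mathrm{I} - z$ associated with the exponentially small singular values $\asymp \e^{-N|\mathrm{Im}\,\xi_\pm(z)|}$. Using these, one builds a well-posed Grushin problem
\begin{equation*}
\mathcal{P}(z) = \begin{pmatrix} P_\mathrm{I} - z & R_- \\ R_+ & 0 \end{pmatrix},
\end{equation*}
with scalar effective Hamiltonian $E_{-+}(z)$ holomorphic in $z$. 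A Schur-complement identity expresses $\log|D(z)| = \log|\det \mathcal{P}(z)| + \log|E_{-+}^\delta(z)|$, where $E_{-+}^\delta = E_{-+} - \delta\, \langle Q e_+(z), e_-(z)\rangle + \mathcal{O}(\delta^2)$ encodes the random correction. The factor $\log|\det\mathcal{P}(z)|$ is a smooth semiclassical object; its integration against $\Delta\chi$, evaluated via the symbol calculus associated to $P_\mathrm{I}$, produces exactly the Weyl term $(2\pi)^{-1}\mathrm{vol}_{]0,N]\times S^1} P_\mathrm{I}^{-1}(\Gamma)$ up to $\mathcal{O}(N^{\delta_0})$.

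The probabilistic core is a smoothed lower bound on the smallest singular value,
\begin{equation*}
s_N(P_\delta - z) \geq \delta\, \e^{-N^{\delta_0}},
\end{equation*}
at fixed $z \in \supp\chi$, holding with failure probability $\mathcal{O}(1)\, \e^{-2N^{\delta_0}}$. This is the standard concentration bound for the smallest singular value of a complex-Gaussian-perturbed matrix, as in \cite{Sj15,SjVo15}. The hypothesis $\kappa > 5/2$ balances two competing requirements: $\delta$ must dominate polynomial losses from the Grushin construction and beat the exponential pseudospectrum inside the ellipse, while remaining small enough not to disturb the symbol asymptotics of $P_\mathrm{I}$. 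Applying the estimate on a suitably fine net of $\supp \Delta\chi$, whose cardinality combined with the $\mathcal{O}(1/r)$ from $\|\Delta\chi\|_\infty$ explains the $(1/r + \ln N) N^{2\kappa}$ prefactor in \eqref{evl.41.5}, and extending by Lipschitz continuity via a resolvent identity, produces a uniform pointwise lower bound on $\log|D(z)|$. The deterministic upper bound $\log|D(z)| \leq N \log(|z| + C)$ closes the sandwich.

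The principal obstacle is the uniform probabilistic lower bound on $\log|E_{-+}^\delta(z)|$ across the pseudospectral region $P_\mathrm{I}^{-1}(\Gamma)$ with exponentially small failure probability, particularly near the boundary $\partial E_1$ where the WKB branches $\xi_\pm$ coalesce and the Grushin problem degenerates; this transition requires a more delicate perturbative analysis of the Schur complement than in the Jordan-block setting. Once handled in the spirit of \cite{Sj15,SjVo15}, substitution of the upper and lower bounds in the Green identity, together with the symbolic evaluation of $\log|\det\mathcal{P}(z)|$, yields \eqref{evI.41} with the stated error $\mathcal{O}(1) N^{\delta_0}(1/r + \ln N)$.
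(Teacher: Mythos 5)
Your overall framework (Grushin reduction with a rank-one defect, effective Hamiltonian $E_{-+}^\delta$, Gaussian anti-concentration, upper bounds on $\ln|\det(P_\delta-z)|$, then a zero-counting argument) is the same family of ideas as the paper, but two steps as you describe them would not go through. The decisive one is your counting mechanism: you integrate $\log|D|$ against $\Delta\chi$ for a smoothed indicator of $\Gamma$ and claim a \emph{uniform} lower bound on $\log|D(z)|$ over $\supp\Delta\chi$, obtained at net points and extended ``by Lipschitz continuity via a resolvent identity''. This cannot work: $\log|D|$ has $-\infty$ singularities at the eigenvalues, and the eigenvalues of $P_\delta$ concentrate precisely near $E_1$, which crosses $\supp\Delta\chi$ at the end caps of $\Gamma$ (and comes within distance $r\gg(\ln N)/N$ of the rest of $\partial\Gamma$); no resolvent identity makes $\log|D|$ Lipschitz across a zero. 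The paper avoids exactly this by never asking for a uniform lower bound: it proves an upper bound $\ln|\det(P_\delta-z)|\le N\varphi(z)$ with $\varphi=\ln|a|+\max(\ln|\zeta_-|,0)+C/N$ on a full neighbourhood, and lower bounds $\ge N(\varphi-\varepsilon)$, $\varepsilon=2N^{\delta_0}/N$, only at a finite, carefully chosen family of points $z_j\in\partial\Gamma$ (about $1/r+\ln N$ of them, with radii $r_j$ shrinking like $\dist(z_j^0,\gamma)/2$ down to $C(\ln N)/N$, and with the freedom to move each $z_j$ inside a small disc), and then invokes the zero-counting theorem of \cite{Sj09b} (Theorem 1.2), which is designed to tolerate ignorance of lower bounds elsewhere. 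That counting theorem, together with the choice of the subharmonic weight $\varphi$, is the missing key ingredient in your outline.

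Two further points of substance. First, the Weyl term does not come from a semiclassical evaluation of $\log|\det\mathcal P(z)|$: in the paper's Grushin problem $\det\mathcal P(z)=a^{N+1}$ is constant in $z$. The term $\frac{1}{2\pi}\mathrm{vol}_{]0,N]\times S^1}P_{\mathrm I}^{-1}(\Gamma)$ arises as $\frac{N}{2\pi}\int_\Gamma\Delta\varphi\,L(dz)$, i.e.\ from the jump of $\partial_n\varphi$ across $E_1$ (in the $\zeta_-$ coordinate, $\Delta\varphi\,L(d\zeta_-)$ is arclength on $S^1$); the interior flatness of $\varphi$ is exactly what the probabilistic lower bound on $|E_{-+}^\delta|$ buys, and this computation is where the stated volume appears. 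Second, your probability bookkeeping is inconsistent with \eqref{evl.41.5}: the per-point failure probability is not $\mO(\e^{-2N^{\delta_0}})$ but $\asymp(t/\delta|Z|)^2\asymp N^{2\kappa}\e^{-2N^{\delta_0}}$ with $t=\e^{-N^{\delta_0}}$ and $\delta\asymp N^{-\kappa}$ (this is where $\kappa$ and the constraint $t\le C\delta F_N$, hence $\kappa>5/2$ together with $\delta N^2\ll1$ and the trace-norm factor $N^{3/2}$, actually enter); the factor $1/r+\ln N$ then counts the boundary points $z_j$, not a two-dimensional net, so attributing $N^{2\kappa}$ to net cardinality misplaces the source of the bound. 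Also, the branches $\zeta_\pm$ coalesce at the focal points $\pm2\sqrt{ab}$, not on $E_1$; near $E_1$ the relevant degeneration is $|\zeta_-|\to1$, handled through $F_N(|\zeta_-|)$, so the obstacle you single out is not the one that needs the work.
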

\par If we choose $\gamma =E^1$ and view $P_\mathrm{I}$ as a function on $]0,N]\times S^1$, then, since $P_\mathrm{I}^{-1}(\Gamma
)=P_\mathrm{I}^{-1}(E_1)=]0,N]\times S^1$, we have 
$$
\frac{1}{2\pi }\mathrm{vol}_{]0,N]\times S^1} P_\mathrm{I}^{-1}(\Gamma )=N
$$
which is equal to the total number of eigenvalues of $P_\delta $, so
the number of eigenvalues outside of $\Gamma $ is bounded be the right
hand side of (\ref{evI.41}). With $r>0$ fixed but arbitrarily small we
get
\begin{cor}\label{evI3} Under the general assumptions in Theorem \ref{evI2},
let $\Gamma $ be any fixed neighborhood of $E_1$. Then with
probability as in (\ref{evl.41.5}), we have
\begin{equation}\label{evI.42}
\left| \# \left( \sigma (P_\delta )\cap ({\bf C}\setminus \Gamma )
  \right)\right|
\le {\mO}(1)N^{\delta_0} \ln N.
\end{equation}
\end{cor}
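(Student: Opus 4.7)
The plan is to deduce Corollary \ref{evI3} as an immediate specialization of Theorem \ref{evI2}, applied with $\gamma$ taken to be the entire ellipse $E_1$ and with $r>0$ chosen so that $\Gamma(r,E_1)\subset\Gamma$. Since $\Gamma$ is a fixed neighborhood of $E_1$, such an $r$ exists and can be chosen independently of $N$; for $N$ large enough the hypothesis $(\ln N)/N\ll r\ll 1$ of the theorem is then automatically verified.

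First I would identify the volume appearing on the right-hand side of (\ref{evI.41}). By construction $E_1\subset\Gamma(r,E_1)$, hence $P_\mathrm{I}^{-1}(\Gamma(r,E_1))\supset P_\mathrm{I}^{-1}(E_1)=\,]0,N]\times S^1$; since $P_\mathrm{I}$ maps $]0,N]\times S^1$ onto $E_1$, the two preimages coincide. Therefore
\begin{equation*}
\tfrac{1}{2\pi}\,\mathrm{vol}_{]0,N]\times S^1}P_\mathrm{I}^{-1}(\Gamma(r,E_1))=N,
\end{equation*}
which is also the total number of eigenvalues of $P_\delta$ counted with multiplicity.

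Plugging this into Theorem \ref{evI2}, on the event of probability at least (\ref{evl.41.5}) we would then obtain
\begin{equation*}
\bigl|\,\#(\sigma(P_\delta)\cap\Gamma(r,E_1))-N\,\bigr|\le\mathcal{O}(1)\,N^{\delta_0}\bigl(\tfrac{1}{r}+\ln N\bigr)=\mathcal{O}(1)\,N^{\delta_0}\ln N,
\end{equation*}
where the fixed constant $1/r$ is absorbed into $\mathcal{O}(1)$. Since $\#\sigma(P_\delta)=N$, the number of eigenvalues lying outside $\Gamma(r,E_1)$ satisfies the same bound, and using $\Gamma(r,E_1)\subset\Gamma$ yields (\ref{evI.42}).

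The only point requiring a moment of thought is the compatibility of the choice of $r$ with Theorem \ref{evI2}: one must check that a fixed (but arbitrarily small) $r>0$ is permitted by the condition $(\ln N)/N\ll r\ll 1$, which is clear in the large-$N$ limit. Aside from this bookkeeping, the corollary is essentially an algebraic consequence of the theorem together with the trivial identity $\#\sigma(P_\delta)=N$, as already telegraphed by the paragraph preceding the corollary's statement. I do not foresee a substantive obstacle.
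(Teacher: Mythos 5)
Your proposal is correct and coincides with the paper's own derivation: the paragraph preceding the corollary takes $\gamma=E_1$, notes that the volume term equals $N$ (the total eigenvalue count), and then fixes $r>0$ small so that the $1/r$ term is absorbed into $\mO(1)\ln N$. The only point you elaborate beyond the paper is the (correct and routine) observation that a fixed $r$ with $\Gamma(r,E_1)\subset\Gamma$ exists and satisfies $(\ln N)/N\ll r\ll 1$ for large $N$.
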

Figure \ref{fig1} illustrates the result of Theorem \ref{evI2} by showing 
the eigenvalues of the $N\times N$-matrix in \eqref{int.1}, 
with $N=500$, $a=1+i$ and $b=0.5$, perturbed with a complex Gaussian 
random matrix and coupling constant $\delta=10^{-5}$. The line indicates 
the image of the unit circle $S^1$ under the symbol of the matrix \eqref{int.1}. 
We can see that most eigenvalues are close to an ellipse with only very few 
in the interior. This phenomenon has been observed numerically in 
\cite{TrEm05} (we refer in particular to Figures 3.2 and 3.3 in \cite{TrEm05}). For 
more numerical simulations for more general Toeplitz matrices, we refer 
the reader to \cite[Section 7]{TrEm05}.
\begin{figure}[ht]
 \begin{minipage}[b]{0.49\linewidth}
  \centering
  \includegraphics[width=\textwidth]{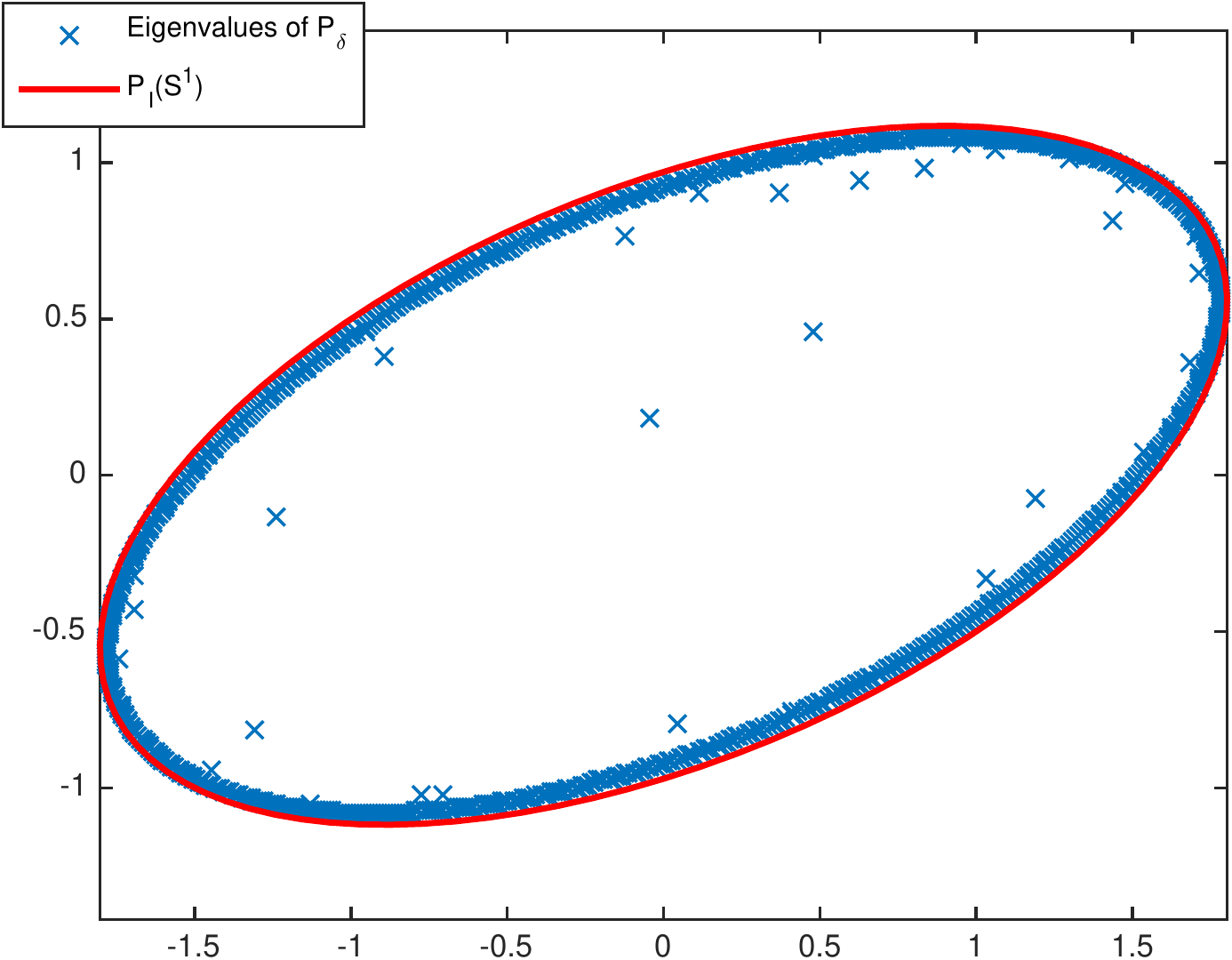}
 \end{minipage}
 \hspace{0cm}
 \begin{minipage}[b]{0.49\linewidth}
  \centering 
  \includegraphics[width=\textwidth]{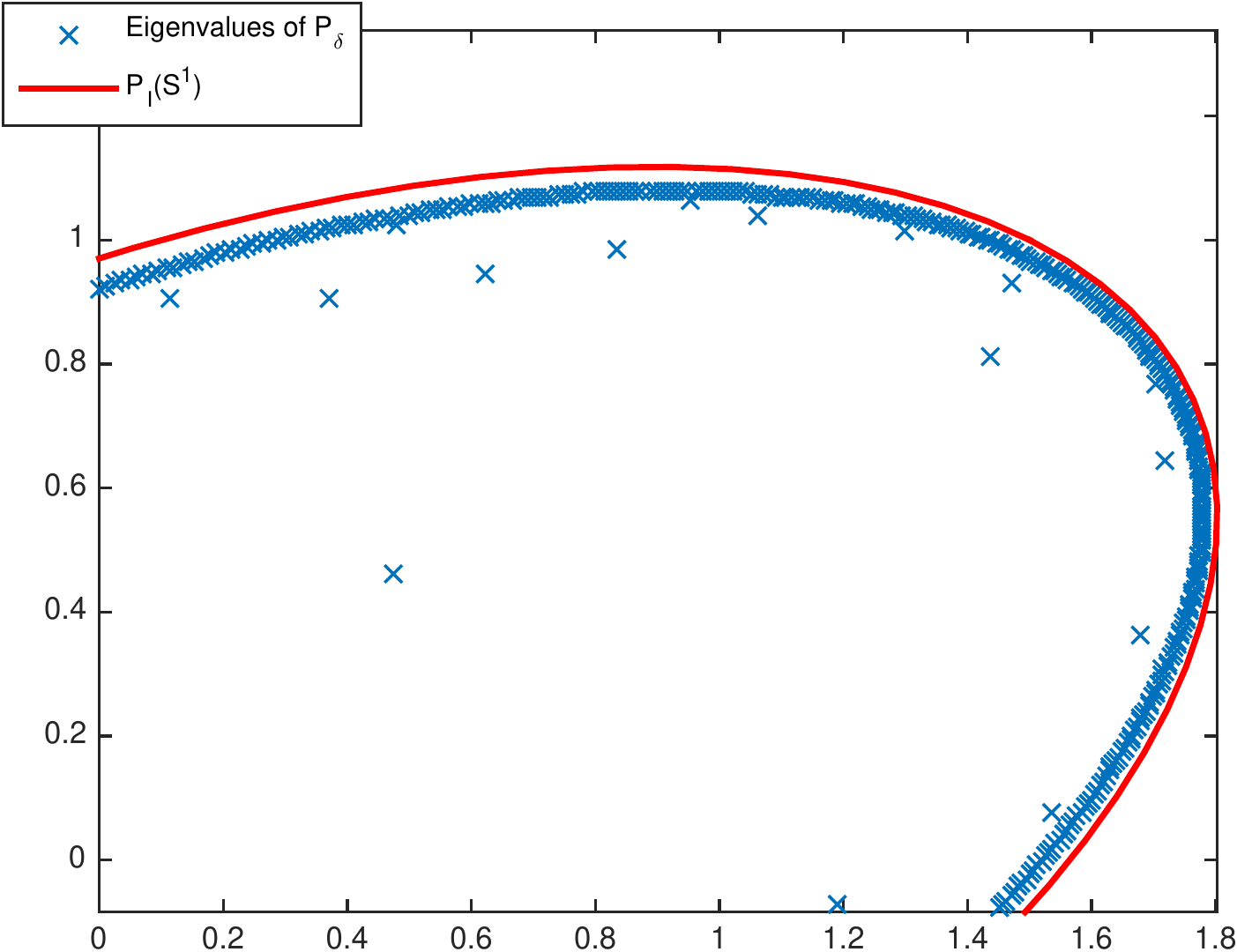}
 \end{minipage}
 \caption{The spectrum of $P_{I}$ with $N=500$, $a=1+i$ and $b=0.5$ perturbed 
 with a complex Gaussian random Matrix with coupling constant $\delta=10^{-5}$. The 
 red line is the image of the unit circle $S^1$ under the symbol $P_\mathrm{I}$.}
  \label{fig1}
\end{figure}
\\
\textbf{Acknowledgements.} J.~Sj\"ostrand was supported by the project 
NOSEVOL ANR 2011 BS 01019 01. M.~Vogel was supported by the projects 
GeRaSic ANR-13-BS01-0007-01 and NOSEVOL ANR 2011 BS 01019 01.%
\section{The range of the symbol}\label{rasy}
\setcounter{equation}{0}
Write
\begin{equation}\label{rasy.1}
a=|a|e^{i\alpha },\  b=|b|e^{i\beta },\ \ \alpha ,\beta \in {\bf R}.
\end{equation}
\subsection{Case I} We have $P(\xi )=|a|e^{i(\alpha +\xi
  )}+|b|e^{i(\beta -\xi )}$ and the largest value of $|P(\xi
)|$ (for $\xi $ real) is attained when the two terms in the expression
for $P(\xi )$ point in the same direction. This happens precisely
when 
$$
\xi =\frac{\beta -\alpha }{2}+\pi k,\ k\in {\bf Z}.
$$
Write $\xi =\frac{\beta -\alpha }{2}+\eta $. Then 
\begin{equation}\label{rasy.2}
\begin{split}P(\xi )&=e^{i\frac{\alpha +\beta }{2}}(|a|e^{i\eta }+|b|
  e^{-i\eta })\\
&=e^{i\frac{\alpha +\beta }{2}}\left( (|a|+|b|)\cos \eta + i(|a|-|b|)\sin
\eta \right).
\end{split}
\end{equation}
Assume, to fix the ideas, that $|b|\le |a|$. Then
$P({\bf R})$ is equal to the ellipse, $E_1$, centred at 0 with major
semi-axis of length $(|a|+|b|)$ pointing in the direction $e^{i(\alpha
+\beta )/2}$ and minor semi-axis of length $|a|-|b|$. The focal points
of $E_1$ are
\begin{equation}\label{rasy.2.5}
\pm 2\sqrt{ab}=\pm e^{i\frac{\alpha +\beta }{2}} 2\sqrt{|a||b|}.
\end{equation}
\par
The left hand side of Figure \ref{fig2} illustrates the range of the symbol in case I by 
presenting $P(S^1)$ with $b=0.5$ and $a=1+i$, $a=0.5+0.5i$.
\subsection{Case II}
Write
$$
P=|a|e^{i(\alpha +\xi )}+|b|e^{i(\beta +2\xi )}.
$$
By the same reasoning as in Case I, the largest value, $|a|+|b|$, of $|P(\xi )|$
is attained when 
$$
\xi =\xi _\mathrm{max}:=\alpha -\beta +2\pi k,\ k\in {\bf Z}.
$$ 
The smallest value $||a|-|b||$ of $|P(\xi )|$ is attained when 
$$
\xi =\xi _\mathrm{min}:=\alpha -\beta +\pi +2\pi k.
$$
We have
\begin{equation}\label{rasy.3}
P(\xi _\mathrm{max})=e^{i(2\alpha -\beta )}(|a|+|b|),\ P(\xi
_\mathrm{min})=-e^{i(2\alpha -\beta )}(|a|-|b|).
\end{equation}

\par Write $\xi =\alpha -\beta +\eta $, so that
\begin{equation}\label{rasy.4}
P(\xi )=e^{i(2\alpha -\beta )}f(e^{i\eta }),\ \ f(\zeta )=|a|\zeta +|b|\zeta ^2.
\end{equation}
The study of $P({\bf R})$ is equivalent to that of $f(S^1)$. Assume
for notational reasons that $a,b>0$, so that 
$$f=f_{a,b}(\zeta )=a\zeta +b\zeta ^2. $$
This function has the unique critical point $\zeta =\zeta _c(a,b)$, given by
$a+2b\zeta _c=0$, 
\begin{equation}\label{rasy.5}
\zeta _c=-\frac{a}{2b}
\end{equation}
and
\begin{equation}\label{rasy.6}
f(\zeta _c)=-\frac{a^2}{4b}.
\end{equation}
Since $f$ is quadratic, we have
\begin{equation}\label{rasy.7}
f(\zeta )=f(\zeta _c)+b(\zeta -\zeta _c)^2=-\frac{a^2}{4b}+b\left( \zeta +\frac{a}{2b}\right) ^2.
\end{equation}
Notice that
\begin{itemize}
\item[1)] $b<a/2$ $\Longrightarrow$ $\zeta _c\not\in \overline{D(0,1)}$,
\item[2)] $b=a/2$ $\Longrightarrow$ $\zeta _c\in S^1$,
\item[3] $b>a/2$ $\Longrightarrow$ $\zeta _c\in D(0,1)$.
\end{itemize}

In the first case there is no pair of distinct points on $S^1$ which
are symmetric to each other with respect to $\zeta _c$ so $f(S^1)$ is a
simple closed curve in ${\bf C}$.

In the second case $f:S^1\to {\bf C}$ is still injective but has a
critical point at $\zeta _c$. The image of $S^1$ is still a simple closed
curve, but with a cusp at $f(\zeta _c)$. 

In the third case, the critical point $\zeta _c$ is situated
on the segment $]-1,0[$. There is one pair of points on $S^1$ that are
symmetric to each other with respect to $\zeta _c$, namely $\alpha _c$ and
$\overline{\alpha _c}$, where $\alpha _c=\zeta _c+i\sqrt{1-\zeta _c^2}$. Thus
$f(\alpha _c)=f(\overline{\alpha _c})\in ]-\infty ,f(\zeta _c)[$ is the
only point in $f(S^1)$ whose inverse image consists of more than one
point. $f(S^1)$ is a closed curve with $f(\alpha
_c)=f(\overline{\alpha _c})$ as its unique
point of self intersection. We can write 
\begin{equation}\label{rasy.8}
f(S^1)=\{f(\alpha _c) \} \cup \gamma _\mathrm{int}\cup \gamma _\mathrm{ext},
\end{equation} 
where $\gamma  _\mathrm{int}=f(S^1\cap \{\zeta ; \Re \zeta <\Re \zeta
_c \} )$, $\gamma
_\mathrm{ext}=f(S^1\cap \{\zeta ; \Re \zeta >\Re \zeta _c \} )$ are smooth curves,
which become simple closed after adding $f(\zeta _c)$ and $\gamma
_\mathrm{int}$ is situated in the interior of the region enclosed by
the closure of $\gamma _\mathrm{ext}$.
\par
The right hand side of Figure \ref{fig2} illustrates the range of the symbol in case II by 
presenting $P(S^1)$ with $b=0.5$ and $a=i$, $a=0.4i$.
\begin{figure}[ht]
 \begin{minipage}[b]{0.53\linewidth}
  \centering
  \includegraphics[width=\textwidth]{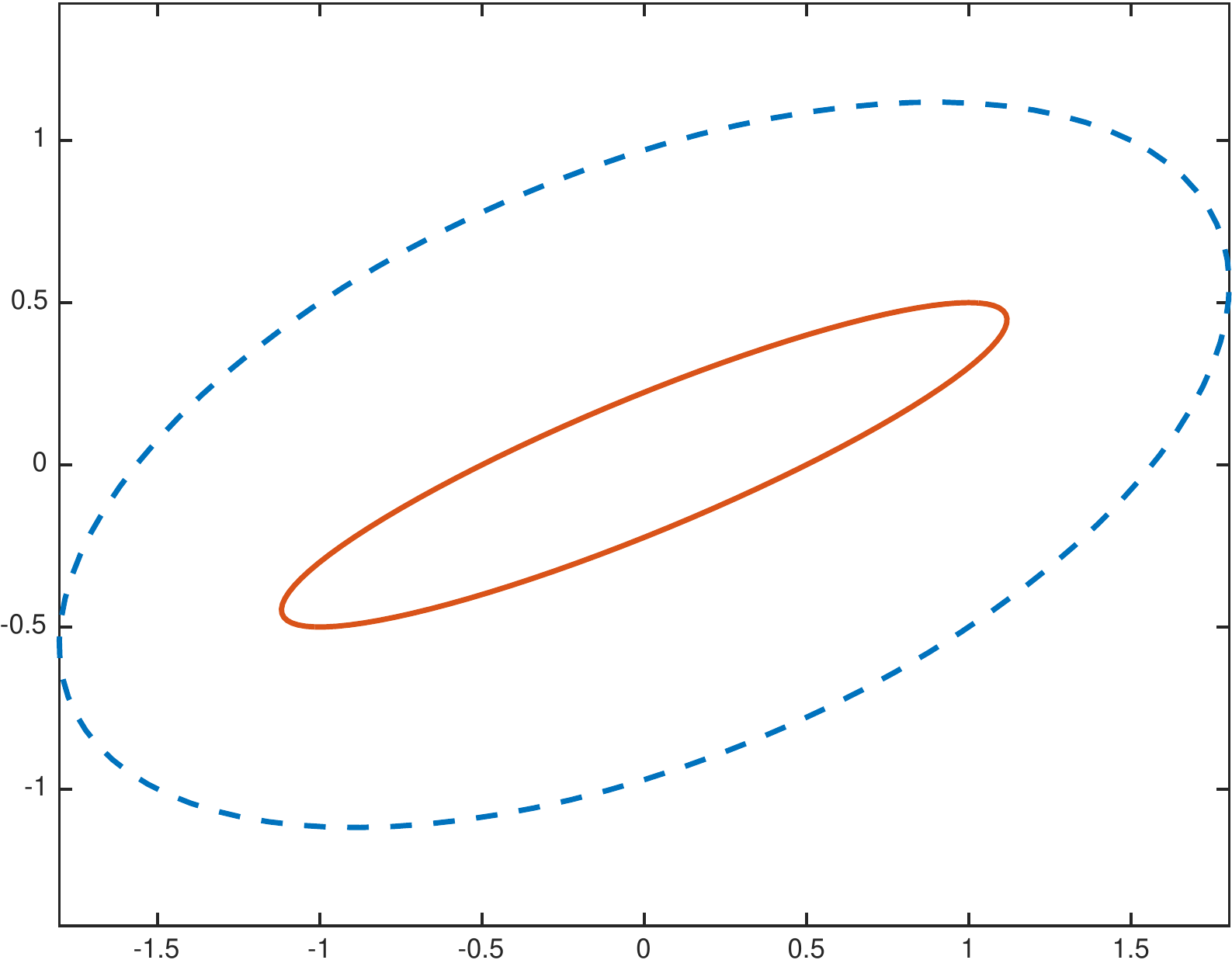}
 \end{minipage}
 \hspace{0cm}
 \begin{minipage}[b]{0.45\linewidth}
  \centering 
  \includegraphics[width=\textwidth]{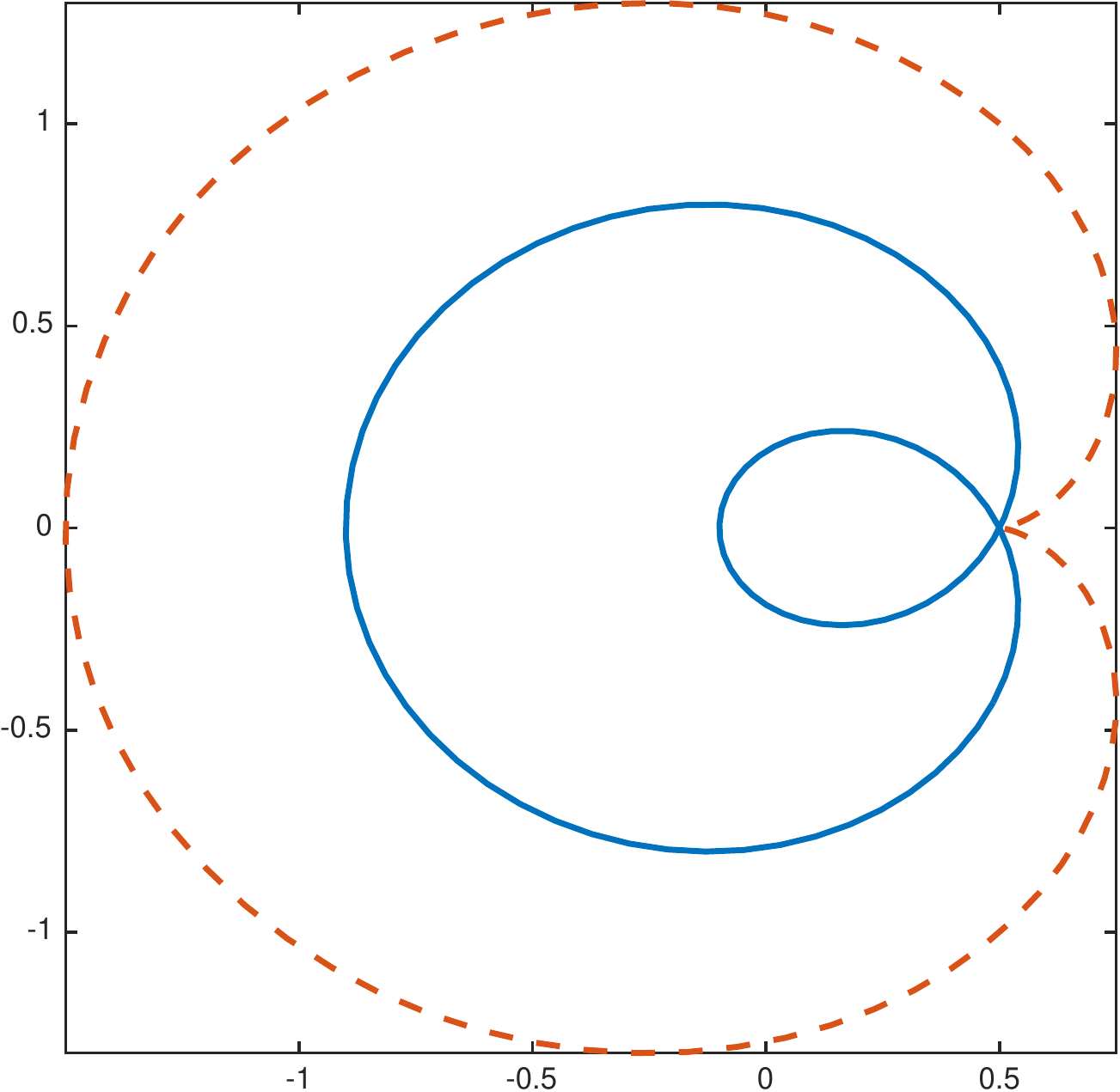}
 \end{minipage}
 \caption{The left hand side shows the image of $S^1$ under the principal symbol of case I (for the dashed ellipse 
 we chose $b=0.5$, $a=1+i$ and for the other ellipse $b=0.5$, $a=0.5+0.5i$). The right hand side  is similar but for the 
 principal symbol of case II (for the dashed line we chose $b=0.5$, $a=i$ and for the continuous line $b=0.5$, $a=0.4i$).}
  \label{fig2}
\end{figure}
\section{Numerical range}\label{nura}
\setcounter{equation}{0}

\noindent\textbf{In case I,} we can write
\begin{equation}\label{nura.1}\begin{split}
&P= 1_{[1,N]}e^{i\frac{\alpha +\beta }{2}}\times \\
&\left((|a|+|b|)\cos
  \left(hD_x-\frac{\beta -\alpha }{2} \right)+i (|a|-|b|)\sin
  \left(hD_x-\frac{\beta -\alpha }{2} \right) \right).
\end{split}
\end{equation} $\cos (hD-(\beta -\alpha )/2)$ and $\sin (hD-(\beta -\alpha )/2)$
are bounded self-adjoint operators on $\ell^2({\bf Z})$, of norm $\le
1$. If $u\in \ell^2([1,N])$ the quantities 
$$
J(u)=( 1_{[1,N]}\cos (hD-(\beta -\alpha )/2)u|u)=(\cos (hD-(\beta -\alpha )/2)u|u)
$$
and
$$
K(u)=( 1_{[1,N]}\sin (hD-(\beta -\alpha )/2)u|u)=(\sin (hD-(\beta -\alpha )/2)u|u)
$$
are real and belong to $[-\| u\|^2,\| u\|^2]$, and we observe that
\begin{equation}\label{nura.2}
(Pu|u)=e^{i\frac{\alpha +\beta }{2}}((|a|+|b|)J(u)+i(|a|-|b|)K(u)).
\end{equation}
By Cauchy-Schwartz, 
$$
J(u)^2\le \| \cos (hD-(\beta -\alpha )/2)u\|^2 \| u\|^2=\left(\cos ^2
(hD-(\beta -\alpha )/2)u|u\right) \| u \|^2,
$$
and similarly,
$$
K(u)^2\le \left(\sin ^2
(hD-(\beta -\alpha )/2)u|u\right)\| u\| ^2.
$$
Summing the two estimates and using that $\cos^2+\sin^2=1$,
it follows that
$$
\sqrt{J(u)^2+K(u)^2}\le \| u\|^2.
$$
If we take $u$ normalised, we deduce from this and (\ref{nura.2})
that
\begin{prop}\label{nura1}
In case I, the numerical range of $P=P_\mathrm{I}$ is contained in the
convex hull of the ellipse $E_1$ described after (\ref{rasy.2}).
\end{prop}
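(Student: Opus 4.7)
The plan is to exploit the decomposition of $P_\mathrm{I}$ given in (\ref{nura.1}), which expresses it (up to an overall phase $e^{i(\alpha+\beta)/2}$ and the truncation $1_{[1,N]}$) as a real linear combination of the two bounded self-adjoint operators $C=\cos(hD_x-(\beta-\alpha)/2)$ and $S=\sin(hD_x-(\beta-\alpha)/2)$ on $\ell^2(\mathbf{Z})$, whose norms are at most $1$ because they are functional-calculus expressions in the self-adjoint operator $hD_x$. The coefficients are $|a|+|b|$ for $C$ and $i(|a|-|b|)$ for $S$, which already suggests that the major and minor semi-axes of $E_1$ will appear naturally.

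For an arbitrary unit vector $u\in\ell^2([1,N])$, identified with an element of $\ell^2(\mathbf{Z})$ with support in $[1,N]$, the outer $1_{[1,N]}$ may be dropped when pairing against $u$, so that
\begin{equation*}
(Pu|u) = e^{i(\alpha+\beta)/2}\bigl((|a|+|b|)J(u)+i(|a|-|b|)K(u)\bigr),
\end{equation*}
with $J(u)=(Cu|u)$ and $K(u)=(Su|u)$ both real. The heart of the argument is to show $J(u)^2+K(u)^2\le 1$. I would obtain this by applying Cauchy--Schwarz separately to each term, using self-adjointness of $C$ and $S$: $J(u)^2\le \|Cu\|^2=(C^2u|u)$ and likewise $K(u)^2\le(S^2u|u)$. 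Summing and invoking the operator identity $C^2+S^2=I$, which is immediate from $\cos^2+\sin^2\equiv 1$ via functional calculus in $hD_x$, yields $J(u)^2+K(u)^2\le (u|u)=1$.

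Finally, the affine map $(J,K)\mapsto e^{i(\alpha+\beta)/2}((|a|+|b|)J+i(|a|-|b|)K)$ carries the closed unit disk $\{J^2+K^2\le 1\}$ onto the closed elliptical disk with major semi-axis $|a|+|b|$ in the direction $e^{i(\alpha+\beta)/2}$ and minor semi-axis $|a|-|b|$; by definition this closed elliptical disk is the convex hull of $E_1$. Hence $(Pu|u)$ lies in this convex hull for every unit vector $u$, which is the desired conclusion. I do not foresee any real obstacle; the only mild point of care is to keep track of the truncation $1_{[1,N]}$, which is harmless since it has no effect when paired against a vector already supported in $[1,N]$.
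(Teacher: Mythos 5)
Your proposal is correct and follows essentially the same route as the paper: decompose $P_\mathrm{I}$ via (\ref{nura.1}) into the self-adjoint pieces $\cos(hD_x-(\beta-\alpha)/2)$ and $\sin(hD_x-(\beta-\alpha)/2)$, bound $J(u)^2+K(u)^2\le \|u\|^4$ by Cauchy--Schwarz plus $\cos^2+\sin^2=1$, and observe that the resulting disk maps onto the convex hull of $E_1$. No gaps.
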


\noindent\textbf{In case II,} we have the trivial inclusion,
$$\mathrm{Num\, Range}(P_\mathrm{II})\subset D(0,|a|+|b|).$$
Using semi-classical analysis and especially the sharp G\aa{}rding
inequality, it seems clear that the numerical range is contained in
a $1/N$-neighborhood of the convex hull of $P(S^1)$.

\section{Spectrum of the unperturbed operator}\label{spnp}
\setcounter{equation}{0}
\subsection{Case I}\label{spnpI} The spectrum of $P_\mathrm{I}$ as a set coincides with the
set of eigenvalues. Consider an eigenvalue $z\in {\bf C}$ and a
corresponding eigenvector $0\ne u\in \ell^2([1,N])$. Extending $u$ to
$[0,N+1]$ by putting $u(0)=0$, $u(N+1)=0$, we have 
\begin{equation}\label{spnp.1}
au(k+1)-zu(k)+bu(k-1)=0,\ k=1,...,N.
\end{equation}
We can extend $u$ further to all of ${\bf Z}$ and get a function
$\widetilde{u}:{\bf Z}\to {\bf C}$ such that 
\begin{equation}\label{spnp.2}
\widetilde{u}(k)=u(k)\hbox{ on }[1,N],\ \widetilde{u}(0)=\widetilde{u}(N+1)=0
\end{equation}
and such that 
\begin{equation}\label{spnp.3}
a\widetilde{u}(k+1)-z\widetilde{u}(k)+b\widetilde{u}(k-1)=0,\ k\in
{\bf Z}.
\end{equation}
The space of solutions to (\ref{spnp.3}) is of dimension 2 and if the
equation 
\begin{equation}\label{spnp.4} a\zeta +b/\zeta -z=0\end{equation}
has two distinct solutions $\zeta _+$ and $\zeta _-$, then it is
generated by the functions $\widetilde{u}_\pm$, given by
\begin{equation}\label{spnp.5}
\widetilde{u}_\pm (k)=\zeta _\pm^k,\ k\in {\bf Z}.
\end{equation}
When the equation has a double solution, which happens precisely when 
\begin{equation}\label{spnp.6}
a\zeta =\frac{b}{\zeta }= \frac{z}{2},
\end{equation}
i.e.\ when $z$ is one of the focal points of $E_1$,
the same space is generated by
\begin{equation}\label{spnp.7}
\widetilde{u}_0(k)= \zeta ^k, \widetilde{u}_1(k)= k \zeta ^k. 
\end{equation}

In the case when the characteristic equation has two distinct
solutions, we can write 
$$
\widetilde{u}(k)=c_+\zeta _+^k+c_-\zeta _-^k,
$$
and apply the boundary conditions in (\ref{spnp.2}), to get
$$
c_++c_-=0,\ \ c_+\zeta _+^{N+1}+c_-\zeta _-^{N+1}=0,
$$
to get 
\begin{equation}\label{spnp.8}
c_-=-c_+,\ \ \zeta _+^{N+1}=\zeta _-^{N+1}.
\end{equation}
This gives the $N$ possibilities,
\begin{equation}\label{spnp.9}
\frac{\zeta _+}{\zeta _-}=e^{2\pi i\nu /(N+1)},\ \nu =1,2,...,N.
\end{equation}
(The case $\nu =0$ is excluded since we are in the case of distinct
solutions of the characteristic equation.) 
The relation between the two solutions of (\ref{spnp.4}) is given by
\begin{equation}\label{spnp.10}
a\zeta _+=b/\zeta _-,
\end{equation}
and insertion of this in (\ref{spnp.9}) gives,
$$
\frac{a}{b}\zeta _+^2=e^{2\pi i\nu /(N+1)}.
$$
Fixing a branch of $\sqrt{b/a}$, we get 
\begin{equation}\label{spnp.11}
\zeta _\pm (\nu ) =\sqrt{b/a}e^{\pm \pi i\nu /(N+1)}, \nu =1,...,N.
\end{equation}
The corresponding eigenvalues are then 
\begin{equation}\label{spnp.12}
  z=z(\nu )=a\zeta _+(\nu )+b/\zeta _+(\nu )=2\sqrt{ab}\cos \left( \frac{\pi \nu }{N+1} \right).
\end{equation}
These values are distinct so we conclude that the spectrum of
$P_\mathrm{I}$ consists of $N$ simple eigenvalues. 

\par Recall the representation (\ref{rasy.1}). We can choose the
branch of the square root so that 
$$
\sqrt{ab}=\sqrt{|a||b|}e^{i(\alpha +\beta )/2}.
$$
We conclude that the eigenvalues
\begin{equation}\label{spnp.13}
z(\nu )=2\sqrt{|a||b|} e^{i(\alpha +\beta )/2}\cos \left( \frac{\pi \nu }{N+1} \right)
\end{equation}
are situated on the major axis of the ellipse $E_1$, between the two focal
points (\ref{rasy.2.5}).
\begin{remark}\label{spnp1}
Let $W=\mathrm{diag\,}(w^k)_{0\le k\le N-1}$ be the diagonal matrix
with elements
$w^k$, $1\le k\le N$, where $0\ne w\in {\bf C}$. Then $P$ has the
same spectrum as 
$$
\widetilde{P}=WPW^{-1}=\begin{pmatrix}0 &a/w &0 &..&.. &0\\
bw &0 &a/w & .. &.. &0\\
0 &bw &0 &a/w &..&0\\
.. &..&..&..&..&..\\
.. &..&..&bw &0 &a/w\\
..&.. &..&..&bw &0
\end{pmatrix},
$$ 
and choosing $w=(a/b)^{1/2}$ gives
$$
\widetilde{P}=(ab)^{1/2}\begin{pmatrix}0 &1 &0 &..&..&.. &0\\
1 &0 &1 & .. &..&.. &0\\
0 &1 &0 &1 &..&..&0\\
.. &..&..&..&..&..&..\\
.. &..&..&..&..&..&..\\
.. &..&.. &..&1 &0 &1\\
..&.. &..&..&..&1 &0
\end{pmatrix},
$$ 
The last matrix is self-adjoint and this explains why the eigenvalues
of $P$ are situated on a segment.
\end{remark}
\subsection{Case II} $P=P_\mathrm{II}$ is nilpotent, so 
\begin{equation}\label{spnp.14}
\sigma (P)=\{ 0\} .
\end{equation}

\section{Size of $|\zeta |$}\label{sizz}
\setcounter{equation}{0}

For the understanding of our operators, it will be important do
determine, depending on $z$, the number of exponential solutions
$\widetilde{u}(k)=\zeta ^k$ that grow near $k=+\infty $
and near $k=-\infty $ respectively. Here $\zeta $ is a solution of the
characteristic equation (\ref{spnp.4}) in case I and of the
characteristic equation
\begin{equation}\label{sizz.1}
a\zeta +b\zeta ^2=z
\end{equation}
in case II. 
\subsection{Case I}\label{sizz.0} 
We recall that we have assumed for simplicity that
$|a|\ge |b|$. The case $|a|=|b|$ will be obtained as a limiting case
of the one when $|a|>|b|$, that we consider now. Let 
$$
f_{a,b}(\zeta )=a\zeta +b/\zeta 
$$
and observe that when $r>0$
$$
f_{a,b}(\partial D(0,r))=f_{ar,b/r}(\partial D(0,1))
$$
which gives a family of confocal ellipses $E_r$. The length of the
major semi-axis of $E_r$ is equal to $|a|r+|b|/r=:g(r)$. $E_{r_1}$ is contained in the bounded domain which has $E_{r_2}$
as its boundary, precisely when $g(r_1)\le g(r_2)$. The function $g$
has a unique minimum at $r=r_\mathrm{min}=(|b|/|a|)^{1/2}$. $g$ is
strictly decreasing on $]0,r_\mathrm{min}]$ and strictly increasing on
$[r_\mathrm{min},+\infty [ $. It tends to $+\infty $ when $r\to 0$ and
when $r\to +\infty $. We have
$g_\mathrm{min}=g(r_\mathrm{min})=2(|a||b|)^{1/2}$ so
$E_{r_\mathrm{min}}$ is just the segment between the two focal points,
common to all the $E_r$. For $r\ne r_\mathrm{min}$, the map $\partial
D(0,r)\to E_r$ is a diffeomorphism. Let $r_1$ be the unique value in
$]0,1[$ for which $g(r_1)=|a|+|b|=g(1)$. We get the following result:
\begin{prop}\label{sizz1} Let $|b|<|a|$. 
\begin{itemize}
\item
When $z$ is strictly inside the ellipse $E_1$ described after
(\ref{rasy.2}), then both solutions of $f_{a,b}(\zeta )=z$ belong to $D(0,1)$.
\item When $z$ is on the ellipse, one solution is on $S^1$ and the
  other belongs to $D(0,1)$.
\item When $z$ is in the exterior region to the ellipse, one solution
  fulfills $|\zeta |>1$ and the other satisfies $|\zeta |<1$. 
\end{itemize}
\end{prop}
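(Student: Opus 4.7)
The plan is to combine two ingredients already available: the relation between the roots given by Vieta's formulas applied to the characteristic equation, and the nesting structure of the confocal ellipses $E_r = f_{a,b}(\partial D(0,r))$ described just above the statement. Rewriting $f_{a,b}(\zeta)=z$ as $a\zeta^2 - z\zeta + b = 0$, the two roots $\zeta_\pm$ satisfy $\zeta_+\zeta_- = b/a$, hence
\begin{equation*}
|\zeta_+|\,|\zeta_-| = |b|/|a| < 1.
\end{equation*}
In particular, at least one of $|\zeta_\pm|$ is $<1$, and if one of them is $\ge 1$ the other is automatically $\le |b|/|a|<1$. Moreover, $\zeta$ is a solution iff $z \in E_{|\zeta|}$, and for $|\zeta|\ne r_{\mathrm{min}}$ the map $\partial D(0,|\zeta|)\to E_{|\zeta|}$ is a diffeomorphism, so a given $z\notin [{-}2\sqrt{ab},2\sqrt{ab}]$ determines $|\zeta_\pm|$ uniquely among the values of $r>0$ with $z\in E_r$.

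The second ingredient is the monotonicity of $g(r)=|a|r+|b|/r$. Since $g$ is strictly increasing on $[r_{\mathrm{min}},\infty)$ and $r_{\mathrm{min}}=(|b|/|a|)^{1/2}<1$, we have $g(r)\ge g(1)$ for all $r\ge 1$, with equality only at $r=1$. Because the $E_r$ are confocal ellipses ordered by the length of the semi-major axis, this means that for every $r\ge 1$ the closed region bounded by $E_r$ contains the closed region bounded by $E_1$, and the open interior of $E_1$ does not meet any $E_r$ with $r\ge 1$. Symmetrically, the open exterior of $E_1$ is foliated by the family $\{E_r\}_{r>1}$: any $z$ in the open exterior lies on a unique $E_r$ with $r>1$.

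With these facts in hand the three cases drop out immediately. If $z$ is strictly interior to $E_1$, then no solution $\zeta$ can satisfy $|\zeta|\ge 1$, since that would force $z\in E_{|\zeta|}$ with $|\zeta|\ge 1$, contradicting the previous paragraph; hence $|\zeta_\pm|<1$. If $z\in E_1$, the diffeomorphism $\partial D(0,1)\to E_1$ gives a root with $|\zeta_+|=1$, and then $|\zeta_-|=|b|/|a|<1$ by the Vieta relation. If $z$ lies in the open exterior of $E_1$, the foliation provides a unique $r>1$ with $z\in E_r$, hence a root with $|\zeta_+|=r>1$, and again Vieta forces $|\zeta_-|=|b|/(|a|r)<1$.

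The only step that requires some care is the nesting claim for the confocal ellipses: that $g(r_2)\ge g(r_1)\ge g_{\mathrm{min}}$ with $r_1,r_2\ge r_{\mathrm{min}}$ implies the closed region enclosed by $E_{r_2}$ contains the closed region enclosed by $E_{r_1}$. This is a standard fact about confocal ellipses sharing the foci $\pm 2\sqrt{ab}$ and can be checked directly from the parametrization in \eqref{rasy.2} applied to $f_{ar,b/r}$, or from the characterization of the interior of a confocal ellipse as $\{z:|z-2\sqrt{ab}|+|z+2\sqrt{ab}|<g(r)\}$. Once this nesting is in place, the proof amounts to the short bookkeeping above.
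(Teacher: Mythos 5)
Your proof is correct and rests on the same geometric input as the paper's own (very terse) argument: the confocal family $E_r=f_{a,b}(\partial D(0,r))$ together with the monotonicity of $g(r)=|a|r+|b|/r$ on either side of $r_{\mathrm{min}}=(|b|/|a|)^{1/2}$. The only cosmetic difference is that the paper locates both moduli at once as the two solutions $\rho_\pm$ of $g(\rho)=\tfrac{1}{2}(|z-c|+|z+c|)$ lying on opposite sides of $r_{\mathrm{min}}$ (both in $]r_1,1[$ in the interior case), whereas you pin down the second root via the Vieta relation $|\zeta_+|\,|\zeta_-|=|b|/|a|$.
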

\begin{proof}
When $z$ is strictly inside $E_1$ it belongs to $E_{\rho _-}=E_{\rho
  _+},$ $E_{\rho _\pm}=f(\partial D(0,\rho
_\pm))$, for $r_1<\rho _-\le
\rho _+<1$ and $\zeta _\pm \in \partial D(0,\rho _\pm )$. The other
two cases are treated similarly.
\end{proof}

\par In the case $|a|=|b|$, $E_1$ is just the segment between the two
focal points. In this case $r_\mathrm{min}=1$ and we get:
\begin{prop}\label{sizz2}
Assume that $|a|=|b|$. \begin{itemize} 
\item If $z\in E_1$ then both solutions of
$f_{a,b}(\zeta )=z$ belong to $S^1$.
\item If $z$ is outside $E_1$, one solution is in $D(0,1)$ and the
  other is in the complement of $\overline{D(0,1)}$.
\end{itemize}
\end{prop}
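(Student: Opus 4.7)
The plan is to exploit Vieta's formulas for the characteristic equation. Rewriting $f_{a,b}(\zeta)=z$ as $a\zeta^2-z\zeta+b=0$ (valid for $\zeta\ne 0$), the two roots $\zeta_\pm$ (counted with multiplicity) satisfy $\zeta_+\zeta_-=b/a$. Under the hypothesis $|a|=|b|$, this gives
\begin{equation*}
|\zeta_+|\,|\zeta_-|=1,
\end{equation*}
so the only possibilities are: either both roots lie on $S^1$, or one lies strictly inside $D(0,1)$ and the other in $\C\setminus\overline{D(0,1)}$.

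For the first bullet, suppose $z\in E_1$. Since by definition $E_1=f_{a,b}(S^1)$ (which in the present case degenerates to the segment between the two focal points $\pm 2\sqrt{ab}$), there exists $\zeta_0\in S^1$ with $f_{a,b}(\zeta_0)=z$. Then $\zeta_0$ is one of the two roots, and by the product relation the other root equals $b/(a\zeta_0)$, whose modulus is $1/|\zeta_0|=1$. Hence both roots lie on $S^1$. At the two focal points $z=\pm 2\sqrt{ab}$ we get a double root $\zeta=\pm\sqrt{b/a}$, which also has modulus $1$, so the conclusion still holds.

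For the second bullet, suppose $z$ lies outside $E_1$. If one of the roots $\zeta_j$ were in $S^1$, then $z=f_{a,b}(\zeta_j)\in f_{a,b}(S^1)=E_1$, contradicting the assumption. Therefore neither root is on $S^1$; combined with $|\zeta_+|\,|\zeta_-|=1$, exactly one root lies in $D(0,1)$ and the other in $\C\setminus\overline{D(0,1)}$. There is essentially no hard step here — the proposition is a direct consequence of the product-of-roots relation. One could alternatively view it as the limiting case $|b|\uparrow|a|$ (so that $r_\mathrm{min}\uparrow 1$) of Proposition \ref{sizz1}, but the direct argument above is cleaner and avoids a continuity argument at the collapsing ellipse.
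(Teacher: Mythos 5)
Your proof is correct, and it takes a genuinely different (and more self-contained) route than the paper. The paper obtains Proposition \ref{sizz2} as the degenerate case $r_\mathrm{min}=1$ of its foliation of the plane by the confocal ellipses $E_r=f_{ar,b/r}(\partial D(0,1))$, governed by the major semi-axis function $g(r)=|a|r+|b|/r$: each root $\zeta_\pm$ lies on $\partial D(0,\rho_\pm)$ where $\rho_\pm$ solve $g(\rho)=\tfrac12(|z-c|+|z+c|)$ on either side of $r_\mathrm{min}$, and for $|a|=|b|$ both $\rho_\pm$ collapse to $1$ exactly when $z$ is on the focal segment. Your argument instead reads everything off the Vieta relation $\zeta_+\zeta_-=b/a$, which under $|a|=|b|$ forces $|\zeta_+|\,|\zeta_-|=1$, combined with the tautology that a root on $S^1$ maps $z$ into $f_{a,b}(S^1)=E_1$; your treatment of the focal points (double root of modulus $1$) is also correct. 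What your approach buys is brevity and independence from the ellipse foliation; what the paper's approach buys is uniformity with Proposition \ref{sizz1} and, more importantly, quantitative control on the moduli $\rho_\pm$ as functions of $z$, which is exploited immediately afterwards in Remark \ref{sizz2.5} to prove the lower bound \eqref{sizz.1.5} on $|\zeta_+-\zeta_-|$ — information the purely algebraic product relation does not give.
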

\begin{remark}\label{sizz2.5}
Recall that $E_\rho $ is the ellipse with focal points $\pm
c=\pm 2\sqrt{ab}$ and length of major semi-axis equal to
$g(\rho ):=a\rho +b/\rho $. Equivalently,
$$
E_\rho =\{ z\in {\bf C};\, |z-c|+|z+c|=2g(\rho ) \} .
$$
The solutions $\zeta =\zeta _\pm$ of
$f_{a,b}(\zeta )=z$ belong to $\partial D(0,\rho _\pm )$,
where $\rho _\pm$ are the solutions to $2g(\rho _\pm
)=|z-c|+|z+c|$, situated on each side of $r
_\mathrm{min}=\sqrt{b/a}$. Also,
$g_\mathrm{min}=g(r _\mathrm{min})=|c|$.

\par Assume that $z$ is restricted to a compact subset $K$ of ${\bf
  C}$. Then $\rho _\pm \in [1/C,C]$ for some $C=C(K)>1$. For $\rho \in
[1/C,C]$, we have
$$
g(\rho )-|c|\asymp |\rho -r _\mathrm{min}|^2.
$$
Consequently,
$$
|\rho_\pm -r_\mathrm{min}|\asymp (g(\rho _\pm )-|c|)^{\frac{1}{2}}=(|z-c|+|z+c|-2|c|)^{\frac{1}{2}}.
$$
Since $\rho _\pm$ are situated on opposite sides of $r_\mathrm{min}$;
we get
\begin{equation}\label{sizz.1.5}
|\zeta _+-\zeta _-|\ge |\rho _+-\rho _-|\asymp (|z-c|+|z+c|-2|c|)^{\frac{1}{2}}
\end{equation}
\end{remark}

\subsection{Case II}\label{sizz.2.0}
\textcolor{blue}{Can be removed from this paper}
We write the equation (\ref{sizz.1}) as 
\begin{equation}\label{sizz.2}
f(\zeta )=z,\hbox{ where }f(\zeta ):=a\zeta +b\zeta ^2=b(\zeta -\zeta _c)^2+c=z,
\end{equation}
$\zeta _c$ is the critical point, given by $\zeta _c=-a/(2b)$
and 
$$
c=f(\zeta _c)=-a^2/(4b).
$$
For any given $z$, the two solutions are symmetric around $\zeta _c$,
and depending on in which case we are according to the conclusion
after (\ref{rasy.7}), we just have to see if both, one or none of the
symmetric solutions belong to $D(0,1)$. 

\par When $|b|<|a|/2$, we have $\zeta _c\not\in \overline{D(0,1)}$,
$f(S^1)$ is a simple smooth closed curve $\gamma $ and if we let
$\Gamma $ be the bounded open set with $\partial \Gamma =\gamma $,
we conclude:
\begin{prop}\label{sizz3}
\begin{itemize}
\item If $z\in \Gamma $, then one of the solutions belongs to $D(0,1)$ and
the other one is in ${\bf C}\setminus \overline{D(0,1)}$. 
\item If $z\in \gamma $, then one of the solutions belongs to $S^1$ and
  the other is in ${\bf C}\setminus \overline{D(0,1)}$.
\item If $z\in {\bf C}\setminus \overline{\Gamma }$, then both
  solutions are in $z\in {\bf C}\setminus \overline{D(0,1) }$.
\end{itemize}
\end{prop}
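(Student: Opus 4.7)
The plan is to reduce all three bullets to a single preliminary statement: when $|b|<|a|/2$, the map $f(\zeta)=a\zeta+b\zeta^2$ restricts to a homeomorphism of $\overline{D(0,1)}$ onto $\overline{\Gamma}$, sending $D(0,1)$ onto $\Gamma$ and $S^1$ onto $\gamma$. Once this is in hand, the three cases follow immediately from the symmetry of the roots of $f(\zeta)=z$ about the critical point $\zeta_c=-a/(2b)$.

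First I would observe that $|b|<|a|/2$ is equivalent to $|\zeta_c|>1$, so in particular $\zeta_c\notin\overline{D(0,1)}$. From the factored form $f(\zeta)=b(\zeta-\zeta_c)^2+c$ one reads off that $f(\zeta_1)=f(\zeta_2)$ iff $\zeta_1+\zeta_2=2\zeta_c$. Consequently $f$ is injective on $\overline{D(0,1)}$: two distinct points of $\overline{D(0,1)}$ cannot have midpoint $\zeta_c$, since that midpoint would lie in $\overline{D(0,1)}$. This simultaneously shows that $f:S^1\to \gamma=f(S^1)$ is a continuous bijection, hence a homeomorphism, confirming that $\gamma$ is a simple closed curve as asserted in the discussion following \eqref{rasy.7}.

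Next I would identify the image of the open disk. By invariance of domain, $f(D(0,1))$ is open, and its boundary is contained in $f(S^1)=\gamma$. Since $f(D(0,1))$ is connected and bounded, it must be contained in one of the two components of $\mathbf{C}\setminus\gamma$; being bounded, it is contained in $\Gamma$, and in fact equals $\Gamma$, either by an invariance-of-domain/boundary argument, or directly by computing the winding number of $\gamma$ about a convenient reference point: $f(0)=0$ is a value taken exactly once in $D(0,1)$ (the other root of $f(\zeta)=0$ is $-a/b=2\zeta_c$, of modulus $>2$), so the argument principle gives winding number $\pm 1$, showing $\Gamma\subset f(D(0,1))$.

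With the lemma established, the case analysis is immediate. For any $z$ the two solutions of $f(\zeta)=z$ are of the form $\zeta$ and $2\zeta_c-\zeta$; whenever $|\zeta|\le 1$ the partner satisfies $|2\zeta_c-\zeta|\ge 2|\zeta_c|-1>1$, so lies strictly outside $\overline{D(0,1)}$. If $z\in\Gamma$, the lemma yields exactly one root in $D(0,1)$, and the partner is outside $\overline{D(0,1)}$; if $z\in\gamma$, exactly one root is on $S^1$ and the partner is again strictly outside; and if $z\in\mathbf{C}\setminus\overline{\Gamma}$, no root can be in $\overline{D(0,1)}$, for otherwise $z\in f(\overline{D(0,1)})=\overline{\Gamma}$. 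The only delicate point is verifying that $f(D(0,1))$ is the bounded rather than the unbounded component of $\mathbf{C}\setminus\gamma$; this is the sole place where a topological input (invariance of domain or a one-line argument-principle computation) is needed, and once it is supplied the rest of the proof is purely algebraic.
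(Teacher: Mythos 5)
Your proof is correct and follows the same route the paper takes implicitly: the symmetry of the two roots about $\zeta_c=-a/(2b)$, the observation that $|b|<|a|/2$ forces $\zeta_c\notin\overline{D(0,1)}$ so that $f$ is injective on the closed disk and $\gamma=f(S^1)$ is a Jordan curve bounding $\Gamma$. The paper states the proposition as an immediate consequence of this geometric picture without further argument, whereas you supply the missing topological details (that $f(D(0,1))=\Gamma$, via the argument principle applied to $f(0)=0$); this is a worthwhile completion but not a different method.
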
 

\par When $|b|=|a|/2$, we have:
\begin{prop}\label{sizz4}
For $z\ne f(\zeta _c)$ (i.e.\ for $z$ away from the cusp of the simple
closed curve $f(S^1)$) we have the same conclusion as in Proposition
\ref{sizz3}. When $z=f(\zeta _c)$ (i.e.\ at the cusp), $\zeta =\zeta
_c\in S^1$ is a double solution of $z=f(\zeta )$ (and there is no other).
\end{prop}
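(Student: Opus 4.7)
The plan is to exploit the symmetry of solutions of $f(\zeta)=z$ around the critical point $\zeta_c$, combined with the fact that in the borderline case $|b|=|a|/2$ one has $|\zeta_c|=1$. Writing the two solutions as $\zeta_\pm=\zeta_c\pm w$ with $w\in \mathbf{C}$, a direct expansion gives
\begin{equation*}
|\zeta_+|^2+|\zeta_-|^2 \;=\; 2|\zeta_c|^2+2|w|^2 \;=\; 2+2|w|^2,
\end{equation*}
so that $|\zeta_+|^2+|\zeta_-|^2\ge 2$ with equality iff $w=0$. This is the key rigidity ingredient: if both solutions lie in $\overline{D(0,1)}$, then $w=0$, i.e.\ $\zeta_+=\zeta_-=\zeta_c$ and $z=f(\zeta_c)$; and if one solution lies on $S^1$ with $w\ne 0$, then the other satisfies $|\zeta_-|^2=2+2|w|^2-1>1$, so it lies strictly outside $\overline{D(0,1)}$.

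With this in hand the cusp case is immediate: since $f(\zeta)-f(\zeta_c)=b(\zeta-\zeta_c)^2$, the point $\zeta=\zeta_c\in S^1$ is the unique (double) solution of $f(\zeta)=f(\zeta_c)$. For $z\in f(S^1)\setminus\{f(\zeta_c)\}$, the injectivity of $f|_{S^1}$ recalled in the discussion preceding (\ref{rasy.8}) produces exactly one preimage on $S^1$, and the rigidity step above forces the other preimage strictly out of $\overline{D(0,1)}$.

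Finally, for $z$ in either connected component of $\mathbf{C}\setminus f(S^1)$ --- well defined by the Jordan curve theorem, since $f(S^1)$ is a simple closed curve even though it carries a cusp --- the counting function $N(z):=\#(f^{-1}(z)\cap D(0,1))$, taken with multiplicity, is locally constant by the argument principle. On the unbounded component $N\equiv 0$ (let $|z|\to\infty$), giving the third case of Proposition \ref{sizz3}, while on the bounded component $N\equiv 1$, which follows by evaluating at $z=0$ where the solutions of $f(\zeta)=0$ are $\zeta=0\in D(0,1)$ and $\zeta=-a/b$ of modulus $|a|/|b|=2$. The rigidity step then places the complementary solution strictly outside $\overline{D(0,1)}$, yielding the first case of Proposition \ref{sizz3}. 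The main obstacle is the mildly delicate topology near the cusp; once the Jordan-curve property and the argument principle are secured, the remaining analytic steps parallel the proof of Proposition \ref{sizz3} exactly, the new element being only the rigidity inequality forced by $|\zeta_c|=1$.
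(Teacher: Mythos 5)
Your proof is correct and rests on the same underlying idea as the paper, which states Proposition \ref{sizz4} without a written proof, relying on the preceding observation that the two roots of $f(\zeta)=z$ are symmetric about $\zeta_c$ and that $\zeta_c\in S^1$ precisely when $|b|=|a|/2$. Your parallelogram identity $|\zeta_+|^2+|\zeta_-|^2=2|\zeta_c|^2+2|w|^2$ is a clean quantitative form of the paper's geometric claim that no pair of \emph{distinct} points of $\overline{D(0,1)}$ can be symmetric about a point of $S^1$, and the argument-principle count on the two complementary Jordan domains supplies the bookkeeping that the paper leaves implicit.
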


\par When $|b|>|a|/2$, we have $\zeta _c\in D(0,1)$ and $\zeta _c\ne
0$. Draw the line through $\zeta _c$ which is perpendicular to the
radius of $D(0,1)$ that passes through that point and let $\alpha _c$
and $\alpha '_c$ be the two points of intersection with $S^1$. We have
seen in Section \ref{rasy} that $f(\alpha _c)=f(\alpha '_c)$ and that
the short circle arcs and the long circle arcs connecting these two
points are mapped by $f$ onto two simple closed curves $\gamma _\mathrm{int}$
and $\gamma _\mathrm{ext}$, both containing the
point $f(\alpha _c)=f(\alpha '_c)$ and such that if we let $\Gamma
_\mathrm{int}$, $\Gamma _\mathrm{ext}$ denote the open bounded sets
bounded by $\gamma _\mathrm{int}$, $\gamma _\mathrm{ext}$
respectively, then away from $f(\alpha _c)$, $\gamma _\mathrm{int}$ is
contained in $\Gamma _\mathrm{ext}$. Also $\Gamma _\mathrm{int}\subset
\Gamma _\mathrm{ext}$.

\par It is geometrically clear that the set of points $\zeta $ in
$\overline{D(0,1)}$ for which the symmetric point with respect to
$\zeta _c$, namely $\zeta '=\zeta _c-(\zeta -\zeta _c)$, also belongs
to $\overline{D(0,1)}$, is obtained by taking the short circular
segment from $\alpha _c$ to $\alpha '_c$, then the convex hull of
that set and finally adding all the symmetric points with respect to
$\zeta _c$. This is a lens shaped region $L$ inside
$\overline{D(0,1)}$ whose image under $f$ coincides with
$\overline{\Gamma } _\mathrm{int}$. This leads to:
\begin{prop}\label{sizz5}
When $|b|>|a|/2$, the following holds for the solutions
(counted with their multiplicity) of the equation $f(\zeta )=z$:
\begin{itemize}
\item If $z\in \Gamma _\mathrm{int}$, we have two solutions in
  $D(0,1)$
\item If $z\in \gamma _\mathrm{int}\setminus \{ f(\alpha _c)\}$ one of the
  solutions is on $S^1$, namely on the short circular arc between
  $\alpha _c$ and $\alpha _c'$, while the other solution is in
  $D(0,1)$.
\item If $z=f(\alpha _c)$, there are two solutions on $S^1$, namely
  $\alpha _c$ and $\alpha '_c$.
\item If
  $z\in \Gamma _\mathrm{ext}\setminus \overline{\Gamma }
  _\mathrm{int}$,
  then there is one solution in $D(0,1)$ and one outside
  $\overline{D(0,1)}$.
\item If $z\in \gamma _\mathrm{ext}\setminus \{ f(\alpha _c)\} $, one
  of the solutions is in $S^1$ (namely on the long circular arc from
  $\alpha _c$ to $\alpha _c'$) and the other one is outside
  $\overline{D(0,1)}$.
\item If $z\in {\bf C}\setminus \overline{\Gamma }_\mathrm{ext}$, both
  solutions are outside $\overline{D(0,1)}$.
\end{itemize}
\end{prop}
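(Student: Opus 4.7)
The proof I propose rests on two elementary facts. First, by Vi\`ete's formulas, the two roots $\zeta,\zeta'$ of $b\zeta^2+a\zeta-z=0$ satisfy $\zeta+\zeta'=-a/b=2\zeta_c$, so the roots are symmetric about the critical point $\zeta_c$. Second, $f-z$ is a polynomial of degree $2$, hence by the argument principle, for $z\notin f(S^1)$ the number of preimages in $D(0,1)$ equals the winding number of $f(S^1)$ about $z$.

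I would first introduce the open lens
\[
 L = \{\zeta \in D(0,1) : 2\zeta_c-\zeta \in D(0,1)\}.
\]
Its boundary consists of the short arc $\sigma_s\subset S^1$ from $\alpha_c$ to $\alpha'_c$ together with its reflection $2\zeta_c-\sigma_s$ through $\zeta_c$. By the symmetry $f(2\zeta_c-\zeta)=f(\zeta)$ and the very definition of $\gamma_{\mathrm{int}}$ in Section \ref{rasy}, both arcs are mapped by $f$ onto $\gamma_{\mathrm{int}}$. Since $\zeta_c\in L$ is the unique critical point of $f$ and $f(\zeta_c)=c\in\Gamma_{\mathrm{int}}$, the map $f|_{\overline L}$ is a proper branched $2$-to-$1$ cover of $\overline{\Gamma}_{\mathrm{int}}$, ramified only at $\zeta_c$.

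The winding-number count is then straightforward. Parametrising $S^1$ as the concatenation of $\sigma_s$ and the long arc $\sigma_\ell$, one has $f(S^1)=\gamma_{\mathrm{int}}\cup\gamma_{\mathrm{ext}}$. Since $f$ is orientation-preserving off $\zeta_c$, both loops are traversed with positive orientation, so their winding numbers add: we get winding $2$ around points of $\Gamma_{\mathrm{int}}$, winding $1$ around points of $\Gamma_{\mathrm{ext}}\setminus\overline{\Gamma}_{\mathrm{int}}$, and winding $0$ outside $\overline{\Gamma}_{\mathrm{ext}}$. Combined with the degree-$2$ total root count and the reflection symmetry $\zeta\mapsto 2\zeta_c-\zeta$, this immediately yields three of the bullets: both roots in $L\subset D(0,1)$ when $z\in\Gamma_{\mathrm{int}}$; one root in $D(0,1)$ and its symmetric partner outside $\overline{D(0,1)}$ when $z\in\Gamma_{\mathrm{ext}}\setminus\overline{\Gamma}_{\mathrm{int}}$; both roots outside when $z\notin\overline{\Gamma}_{\mathrm{ext}}$. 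The three boundary statements (on $\gamma_{\mathrm{int}}$, $\gamma_{\mathrm{ext}}$, and at $f(\alpha_c)$) then follow by continuity together with the identifications $\sigma_s=\partial L\cap S^1$ and $\sigma_\ell=(\overline{D(0,1)}\setminus L)\cap S^1$, which locate exactly one root on the appropriate arc; the symmetric partner is then found inside $D(0,1)$ (short-arc case) or outside $\overline{D(0,1)}$ (long-arc case), and at $z=f(\alpha_c)$ the two roots coincide with $\alpha_c$ and $\alpha'_c$.

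The only non-routine point is the orientation verification in the winding computation, namely checking that $\gamma_{\mathrm{int}}$ and $\gamma_{\mathrm{ext}}$ contribute with the same sign. This is most cleanly handled via the local form $f(\zeta)=c+b(\zeta-\zeta_c)^2$ near the critical point: a small circle traversed once around $\zeta_c$ has image winding exactly twice around $c$, and by homotopy within the complement of the critical value, this winding number persists to $\partial L$, confirming $\deg (f|_{\overline L})=2$ and hence the claim in $\Gamma_{\mathrm{int}}$. Once this is settled, the remaining bookkeeping is routine.
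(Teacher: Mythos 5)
Your proof is correct, but it takes a partly different route from the paper's. The paper's own argument is purely geometric: after the same observation that the two roots of $b\zeta^2+a\zeta-z=0$ are symmetric about $\zeta_c$, it introduces the same lens $L=\{\zeta\in\overline{D(0,1)}:\,2\zeta_c-\zeta\in\overline{D(0,1)}\}$ and asserts, as ``geometrically clear'', that $f(\overline{L})=\overline{\Gamma}_{\mathrm{int}}$ (with $f(\overline{D(0,1)})=\overline{\Gamma}_{\mathrm{ext}}$ implicit); all six bullets are then read off from the symmetry together with the identification in Section \ref{rasy} of which arcs of $S^1$ are mapped to $\gamma_{\mathrm{int}}$ and $\gamma_{\mathrm{ext}}$. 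You keep the symmetry and the lens for the three boundary cases --- and your treatment there (the short arc reflects through $\zeta_c$ into $D(0,1)$, the long arc reflects outside $\overline{D(0,1)}$, the circles $|\zeta|=1$ and $|\zeta-2\zeta_c|=1$ meeting only at $\alpha_c,\alpha_c'$) is exactly the paper's mechanism, just made explicit --- but for the three open regions you substitute the argument principle: the number of roots in $D(0,1)$ equals the winding number of $f|_{S^1}$ about $z$, which you pin down as $2,1,0$ by additivity over the two simple loops and by anchoring the orientation at the critical value through the local model $f=c+b(\zeta-\zeta_c)^2$. What this buys is that the ``geometrically clear'' mapping statements are no longer needed for the counting in the open regions; they are replaced by a short analytic computation that also handles the exterior bullet (winding $0$) automatically, a case the paper leaves to geometric intuition. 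The inputs you share with the paper (that $\gamma_{\mathrm{int}},\gamma_{\mathrm{ext}}$ are simple closed curves with $\Gamma_{\mathrm{int}}\subset\Gamma_{\mathrm{ext}}$, from Section \ref{rasy}) are legitimate to quote, and the one step you flag as non-routine can be settled even more directly than by homotopy: evaluate the winding number at $z=c$ itself, where $f-c$ has a double zero at $\zeta_c\in D(0,1)$, so the argument principle gives winding number $2$ on the component $\Gamma_{\mathrm{int}}$ at once.
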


\section{Grushin problems for the unperturbed operators}\label{grunp}
\setcounter{equation}{0}
From now on we only consider the case I and write $P=P_\mathrm{I}$. 
We are interested in the case when $z$ is inside
the ellipse $E_1$, so that the two solutions of the characteristic
equation are in $D(0,1)$ and correspond to exponential solutions
that decay in the direction of increasing $k$. Consequently, in our
Grushin problem we put a condition of type ``$+$'' at the endpoint
$k=1$ of $[1,N]$ and a corresponding co-condition of type ``$-$'' at
the right end point $k=N$. Define $R_+:{\bf C}^N\to {\bf C}$ and
$R_-:{\bf C}\to {\bf C}^N$, by 
\begin{equation}\label{grunp.1}
R_+u=au(1),\ \ R_-u_-=au_-e_N,
\end{equation}
for $u\in {\bf C}^N$, $u_-\in {\bf C}$, where $e_N$ denotes the $N$th
canonical basis vector in ${\bf C}^N$ so that $e_N(j)=\delta
_{j,N}$. We are then interested in inverting 
\begin{equation}\label{grunp.2}
\begin{pmatrix}P-z &R_-\\R_+
  &0\end{pmatrix}\begin{pmatrix}u\\u_-\end{pmatrix}
= \begin{pmatrix}v\\ v_+\end{pmatrix}
\end{equation} 
in ${\bf C}^N\times {\bf C}$.

\par It will be convenient (without changing the mathematics) to
permute the components $v,v_+$, so
we apply the block matrix 
$$
\begin{pmatrix}0 &1\\ 1 &0 \end{pmatrix}: {\bf C}^N\times {\bf C}\to
{\bf C}\times {\bf C}^N
$$  
to the left and get the equivalent problem.
\begin{equation}\label{grunp.3}
{\mathcal{ P}}(z)\begin{pmatrix}u\\ u_-\end{pmatrix}= \begin{pmatrix}v_+ \\
  v\end{pmatrix},\hbox{ where }{\mathcal{P}(z)}=\begin{pmatrix}R_+ &0\\ P-z
  &R_-\end{pmatrix}:{\bf C}^N\times {\bf C}\to {\bf C}\times {\bf C}^N.
\end{equation}
If
$$
\begin{pmatrix} E  &E_+\\ E_- &E_{-+}\end{pmatrix}
$$
denotes the inverse of the matrix in (\ref{grunp.2}) (when it exists), then the inverse of the matrix in (\ref{grunp.3}) is
given by 
\begin{equation}\label{grunp.4}
{\mathcal{E}}=\begin{pmatrix}E_+ &E\\E_{-+} &E_-\end{pmatrix}:{\bf C}\times
{\bf C}^N\to {\bf C}^N\times {\bf C}.
\end{equation}
The important quantity $E_{-+}$ now appears in the lower left
corner. 

\par Identifying ${\bf C}^N\times {\bf C}\simeq {\bf C}\times {\bf
  C}^N\simeq {\bf C}^{N+1}$ in the natural way we see that ${\mathcal{P}}(z)$ 
  has a lower triangular matrix with 
\begin{itemize}
\item all entries on the main diagonal equal to $a$,
\item all entries on the ``subdiagonal'' (i.e.\ with indices
  $j,k$ satisfying $j-k=1$) equal to $-z$,
\item all entries on the ``subsubdiagonal'' (with indices satisfying
  $j-k=2$) equal to $b$,
\item all other entries equal to zero.
\end{itemize}
Equivalently, we can write 
\begin{equation}\label{grunp.5}
{\mathcal{P}}(z)=1_{[1,N+1]}(a-z\tau +b\tau ^2),
\end{equation}
where $\tau $ denotes translation to the right by one unit, when
identifying
${\bf C}^{N+1}\simeq \ell^2_{[1,N+1]}({\bf Z})=\{ u\in \ell^2({\bf
  Z}); u(k)=0\hbox{ for }k\not\in [1,N+1] \}$.

\par We see that ${\mathcal{P}}(z)$ is invertible with inverse ${\mathcal{E}}(z)$ 
given by a lower triangular matrix with constant entries
$c_\nu $ on the $\nu $th subdiagonal (i.e.\ the entries with index
$(j,k)$ for which $j-k=\nu $). Further $c_0=1/a$. Equivalently,
\begin{equation}\label{grunp.6}
{\mathcal{E}}(z)=1_{[1,N+1]}(c_0+c_1\tau +...+c _N\tau ^N).
\end{equation}
Also notice that $c_\nu $ is independent of $N$. The
first column $u$ in the matrix of ${\mathcal{E}}(z)$ is equal to
$(c_0,c_1,...,c_N)^\mathrm{t}$ and it solves the problem 
$$
{\mathcal{ P}}(z)u=e_1, \hbox{ where }e_1(j)=\delta _{1,j}.
$$
This gives the equations,
\[
\begin{split}
au(1)&=1,\\
-zu(1)+au(2)&=0,\\
bu(1)-zu(2)+au(3)&=0,\\
bu(2)-zu(3)+au(4)&=0,\\
.&..\\
bu(N-1)-zu(N)+au(N+1)&=0.
\end{split}
\]
Extend $u$ to $u\in \ell^2([0,N+1])$, by putting $u(0)=0$. Then
we get,
\begin{equation}\label{grunp.7}
\begin{cases}bu(k-1)-zu(k)+au(k+1)=0,\ k=1,...,N,\\
u(0)=0,\\
u(1)=1/a.
\end{cases}
\end{equation}
Here $u$ can be extended uniquely to all of ${\bf Z}$ by solving successively
the first equation in (\ref{grunp.7}) for $k=0,-1,..$ and for
$k=N+1,N+2,...$ and the extended function $u$ has to be of the
form
\begin{equation}\label{grunp.8}
u(k)=c_+\zeta _+^k+c_-\zeta _-^k,
\end{equation}
where $\zeta _\pm$ are the solutions of (\ref{spnp.4}), and we assume
that $z$ is not a focal point of $E_1$, so that $\zeta _+\ne \zeta _-$. 
The last two equations in (\ref{grunp.7}) give
$$
c_++c_-=0,\ \ \zeta _+c_++\zeta _-c_-=1/a ,
$$
and we conclude that
\begin{equation}\label{grunp.9}
c_+=\frac{1}{a(\zeta _+-\zeta _-)},\ \ c_-=-\frac{1}{a(\zeta _+-\zeta _-)}
\end{equation}
By (\ref{grunp.4}), we know that $E_{-+}$ is the last component of
$u$, and hence
\begin{equation}\label{grunp.10}
E_{-+}(z)=\frac{\zeta _+^{N+1}-\zeta _-^{N+1}}{a(\zeta _+-\zeta _-)}.
\end{equation}

\par Recall that by a general identity for Grushin problems,
\begin{equation}\label{grunp.11}
(-1)^N E_{-+}(z)\det {\mathcal{P}}(z) =\det (P-z).
\end{equation}
Here the factor $(-1)^N$ comes from the displacement of the 1st line
when going from the ``standard'' matrix in (\ref{grunp.2}) to ${\mathcal{P}}(z)$.
Now,
\begin{equation}\label{grunp.12}
\det {\mathcal{P}}(z)=a^{N+1}.
\end{equation}
The last three equations give,
\begin{equation}\label{grunp.13}
\det (P-z)=(-a)^N\,\frac{\zeta _+^{N+1}-\zeta _-^{N+1}}{\zeta _+-\zeta _-}.
\end{equation}

\par We observe a symmetry property of (\ref{grunp.13}): Reversing the
orientation and permuting $a$ and $b$, we could replace $R_+$ by
$\widetilde{R}_+u=bu(N)$ and $R_-$ by $\widetilde{R}_-u=bu_-e_1$. We
should then replace $\zeta _\pm$ by $1/\zeta _\pm$ and
(\ref{grunp.13}) becomes
\begin{equation}\label{grunp.14}
\det (P-z)=(-b)^N\frac{\zeta _+^{-(N+1)}-\zeta _-^{-(N+1)}}{\zeta
  _+^{-1}-\zeta _-^{-1}}.
\end{equation}
Using that $\zeta _+\zeta _-=b/a$, we check directly that the right
hand sides in (\ref{grunp.13}) and (\ref{grunp.14}) are equal.

\section{Estimates on the Grushin problems and the resolvent}
The aim of this section is to obtain estimates on the 
Grushin problem and the resolvent for the unperturbed operator. 
In the following we will work 
with the convention that the two solutions $\zeta_{\pm}$ 
to the characteristic equations are such that 
\begin{equation*}
 |\zeta_+| \leq |\zeta_-|.
\end{equation*}
\subsection{When $\zeta _+$ and $\zeta _-$ both belong to $D(0,1)$.}
Here we give estimates on \eqref{grunp.4} which 
is the inverse of \eqref{grunp.3}, the Grushin problem for the unperturbed 
operator $P_\mathrm{I}$ in the case when $z$ is inside the ellipse $E_1$. 
\\
\par
By \eqref{grunp.4} and \eqref{grunp.6}
\begin{equation}\label{estgrunp.1}
 E = 1_{[1,N]}(c_0+c_1\tau + \dots + c_{N-2}\tau^{N-2})1_{[2,N+1]},
\end{equation}
\begin{equation}\label{estgrunp.2}
 E_+ = \begin{pmatrix}
        c_0 \\
        \vdots \\
        c_{N-1}
       \end{pmatrix}
  , \quad 
  E_-=(c_{N-1},\dots,c_0),
\end{equation}
where, using \eqref{grunp.8} and \eqref{grunp.9},
\begin{equation}\label{estgrunp.3}
 c_k = \frac{\zeta_+^{k+1}-\zeta_-^{k+1}}{a(\zeta_+-\zeta_-)}, 
 \quad 
 k=0,\dots, N.
\end{equation}
For $k\in\mathds{N}$, $t\in \overline{D(0,1)}$, let 
\begin{equation}\label{estgrunp.3.1}
 F_{k+1}(t) = 1+ t+ \dots + t^k 
 = 
 \begin{cases}
  k+1, ~\text{when } t =1, \\
  \frac{1-t^{k+1}}{1-t}, ~\text{when } t\neq 1. 
 \end{cases}
\end{equation}
By the triangle inequality 
\begin{equation*}
 | F_{k+1}(t) | \leq F_{k+1}(|t|). 
\end{equation*}
We have 
\begin{equation*}
 |F_{k+1}(t)| \leq \min\!\left(k+1, \frac{2}{|1-t|}\right).
\end{equation*}
From \eqref{estgrunp.3} we get  
\begin{equation}\label{estgrunp.4}
 c_k = \frac{\zeta_-^k}{a}F_{k+1}\left(\frac{\zeta_+}{\zeta_-}\right),
\end{equation}
\begin{equation}\label{estgrunp.5}
 |c_k| \leq \frac{|\zeta_{-}|^{k}}{|a|}
 \min\left(k+1,\frac{2}{|1-\zeta_+/\zeta_-|}\right).
\end{equation}
By \eqref{grunp.10}, \eqref{estgrunp.4}, \eqref{estgrunp.5} we have that 
\begin{equation}\label{estgrunp.8}
 |E_{-+}| = \frac{|\zeta_{-}|^{N}}{|a|}|F_{N+1}(\zeta_+/\zeta_-)| 
 \leq \frac{|\zeta_{-}|^{N}}{|a|}
 \min\left(N+1,\frac{2}{|1-\zeta_+/\zeta_-|}\right).
\end{equation}
\begin{prop}
 If $\zeta_{\pm}\in D(0,1)$, then 
\begin{equation}\label{estgrunp.6}
 \| E\| \leq |a|^{-1}
  \min\left(N,\frac{2}{1-|\zeta_-|}\right)
  \min\left(N,\frac{2}{1-|\zeta_-|},\frac{2}{|1-\zeta_+/\zeta_-|}\right),
\end{equation}
\begin{equation}\label{estgrunp.7}
 \|E_{+}\|=\|E_-\| \leq |a|^{-1}
  \min\left(N,\frac{2}{1-|\zeta_-|}\right)^{\frac{1}{2}}
  \min\left(N,\frac{2}{1-|\zeta_-|},\frac{2}{|1-\zeta_+/\zeta_-|}\right).
\end{equation}
%
%
%
\end{prop}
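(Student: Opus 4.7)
The plan is to read off everything from the explicit formulas \eqref{estgrunp.1}--\eqref{estgrunp.3} together with the pointwise estimate \eqref{estgrunp.5}. Write $\rho := |\zeta_-| \in (0,1)$ and $A := 2/|1-\zeta_+/\zeta_-|$, so that $|c_k| \leq |a|^{-1}\rho^k \min(k+1, A)$. The key elementary observation is the uniform bound
$$(k+1)\rho^k \;\leq\; \frac{1-\rho^{k+1}}{1-\rho} \;\leq\; \frac{1}{1-\rho},$$
which follows at once from $\sum_{j=0}^{k}\rho^j \geq (k+1)\rho^k$. Combined with the trivial bounds $\rho^k(k+1)\leq k+1\leq N$ and $\rho^k A\leq A$, this gives the pointwise estimate
$$|c_k| \;\leq\; \frac{1}{|a|}\,\min\!\Bigl(N,\tfrac{2}{1-\rho},A\Bigr),\qquad 0\leq k\leq N-1.$$

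Next I will establish the $\ell^1$-bound
$$\sum_{k=0}^{N-1}|c_k| \;\leq\; \frac{1}{|a|}\,\min\!\Bigl(N,\tfrac{2}{1-\rho}\Bigr)\min\!\Bigl(N,\tfrac{2}{1-\rho},A\Bigr)$$
by a short case split on whether $A\leq 2/(1-\rho)$. If $A<2/(1-\rho)$, use $\min(k+1,A)\leq \min(N,A)$ together with $\sum_{k=0}^{N-1}\rho^k \leq \min(N,1/(1-\rho))$ to conclude. If $A\geq 2/(1-\rho)$, bound $\min(k+1,A)$ by $k+1$ and compute $\sum_{k=0}^{N-1}(k+1)\rho^k \leq \min\bigl(N^2,(1-\rho)^{-2}\bigr)=\min(N,1/(1-\rho))^2$; in both regimes the right-hand side is dominated by the claimed product, after noting $\min(N,1/(1-\rho))\leq \min(N,2/(1-\rho))$.

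Finally, the two claimed inequalities follow quickly. For $\|E\|$, the convolution representation \eqref{estgrunp.1} and the triangle inequality give $\|E\| \leq \sum_{n=0}^{N-2}|c_n|\,\|1_{[1,N]}\tau^n 1_{[2,N+1]}\| \leq \sum_{k=0}^{N-1}|c_k|$, which is exactly \eqref{estgrunp.6}. For $\|E_+\|=\|E_-\|=\bigl(\sum_{k=0}^{N-1}|c_k|^2\bigr)^{1/2}$, apply the elementary inequality $\sum_k|c_k|^2 \leq (\max_k|c_k|)(\sum_k|c_k|)$, insert the pointwise and $\ell^1$ bounds from the previous two steps, and take the square root to obtain \eqref{estgrunp.7}. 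The only real substance is the pointwise inequality $(k+1)\rho^k\leq 1/(1-\rho)$; the rest is bookkeeping, and the one step requiring a little patience is the case split in the $\ell^1$-bound, where one has to track the three competing quantities $N$, $2/(1-\rho)$, $A$ carefully.
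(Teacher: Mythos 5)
Your proof is correct. The treatment of $\|E\|$ is essentially the paper's: both start from $\|E\|\le\sum|c_k|$ and the pointwise bound \eqref{estgrunp.5}, and both end up comparing $\sum(k+1)\rho^k$ with $\min(N^2,(1-\rho)^{-2})$; your case split on whether $A=2/|1-\zeta_+/\zeta_-|$ exceeds $2/(1-\rho)$ is just a reorganization of the paper's ``min of the sums'' bookkeeping. Where you genuinely diverge is \eqref{estgrunp.7}: the paper computes $\sum_0^{N-1}|c_k|^2$ head-on, which requires the identities $\sum_0^{\infty}t^k(k+1)^2=\frac{1+t}{(1-t)^3}$ and $F_N(t^2)\le F_N(t)$ and a further step to convert $\min(N^3,(1-\rho)^{-3})$ and $F_N$ into the asserted product of square roots; you instead invoke the interpolation $\sum|c_k|^2\le(\max_k|c_k|)(\sum_k|c_k|)$, which reduces the $\ell^2$ estimate to the $\ell^\infty$ bound $(k+1)\rho^k\le 1/(1-\rho)$ and the $\ell^1$ bound already established. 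Your route is shorter and avoids the second-derivative generating-function computation entirely; the paper's direct computation is marginally sharper in the constants (e.g.\ it retains the factor $(1+t)^{-1}$ before discarding it) but that gain is irrelevant to how the proposition is used. All the individual steps you rely on check out: $(k+1)\rho^k\le\sum_0^k\rho^j$ since $\rho\le1$, the verification that in each case of the split the product $\min(N,A)\cdot\min(N,1/(1-\rho))$ or $\min(N,(1-\rho)^{-1})^2$ is dominated by the claimed right-hand side, and the identification $\|E_+\|=\|E_-\|=(\sum_0^{N-1}|c_k|^2)^{1/2}$ from \eqref{estgrunp.2}.
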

\begin{proof}
 From \ref{estgrunp.1}, we infer that 
 \begin{equation*}
  \|E\| \leq |c_0| + \dots + |c_{N-2}|. 
 \end{equation*}
Then, by \eqref{estgrunp.5}, 
\begin{equation*}
 \| E\| \leq \frac{1}{|a|}
 \min\left(\sum_0^{N-2} |\zeta_-|^k(k+1), 
 \frac{2}{|1-\zeta_+/\zeta_-|}F_{N-1}(|\zeta_-|)\right).
\end{equation*}
Here 
\begin{equation*}
 \sum_0^{N-2}|\zeta_-|^k(k+1) \leq 
 \min\left( (N-1)^2, \sum_0^{\infty}|\zeta_-|^k(k+1)\right)
\end{equation*}
and
\begin{equation*}
 \sum_0^{\infty}|\zeta_-|^k(k+1) = 
 \partial_t \left(\sum_0^{\infty}t^{k+1}\right)_{t=|\zeta_-|}
 = \partial_t\left(\frac{t}{1-t}\right)_{t=|\zeta_-|} = 
 \frac{1}{(1-|\zeta_-|)^2},
\end{equation*}
leading to 
\begin{equation*}
  \| E\| \leq 
  \frac{1}{|a|}
  \min \left( 
  (N-1)^2, 
  \frac{1}{(1-|\zeta_-|)^2}, 
  \frac{2(N-1)}{|1-\zeta_+/\zeta_-|}, 
  \frac{4}{|1-\zeta_+/\zeta_-|(1-|\zeta_-|)}
  \right) 
\end{equation*}
which implies \eqref{estgrunp.6}. Continuing, we see 
by \eqref{estgrunp.2}, (\ref{estgrunp.5}), that 
\begin{equation*}
\begin{split}
 \|E_-\|^2=\|E_+\|^2 &= |c_0|^2 + \dots + |c_{N-1}|^2 \\
 &\leq
 \frac{1}{|a|^2}\min \left(
 \sum_0^{N-1}|\zeta_-|^{2k}(k+1)^2, 
 \frac{4}{|1-\zeta_+/\zeta_-|^2}\sum_0^{N-1}|\zeta_-|^{2k}
 \right).
\end{split}
\end{equation*}
Here, 
\begin{equation*}
 \sum_0^{N-1}|\zeta_-|^{2k} = F_N(|\zeta_-|^2),
\end{equation*}
and for $0\leq t \leq 1$, 
\begin{equation*}
 F_N(t^2) = \frac{1-t^{2N}}{1-t^2} 
 =F_N(t)\frac{1+t^N}{1+t} \leq F_N(t)
\end{equation*}
so 
\begin{equation*}
 \sum_0^{N-1}|\zeta_-|^{2k} \leq F_N(|\zeta_-|).
\end{equation*}
Furthermore, 
\begin{equation*}
 \sum_0^{N-1}|\zeta_-|^{2k}(k+1)^2\leq 
 \min\left( 
 N^3,
 \sum_0^{\infty}|\zeta_-|^{2k}(k+1)^2
 \right).
\end{equation*}
Here, 
\begin{equation*}
 \begin{split}
  \sum_0^{\infty}t^{k}(k+1)^2 
  &= \partial_t t \partial_t\sum_0^{\infty}t^{k+1}
  =\partial_t t \partial_t \frac{t}{1-t} \\
  & = \partial_t t \partial_t \frac{1}{1-t}
  =\partial_t \frac{t}{(1-t)^2}
  =\frac{1+t}{(1-t)^3}.
 \end{split}
\end{equation*}
Hence
\begin{equation*}
 \begin{split}
 \sum_0^{N-1}|\zeta_-|^{2k}(k+1)^2 
 &\leq 
 \min\left(N^3, \frac{1+|\zeta_-|^2}{(1-|\zeta_-|^2)^3}\right)
 \leq 
 \min\left(N^3, \frac{1}{(1+|\zeta_-|)(1-|\zeta_-|)^3}\right) \\
 &\leq 
 \min\left(N^3, \frac{1}{(1-|\zeta_-|)^3}\right) 
 \leq 
  \min\left(N, \frac{1}{(1-|\zeta_-|)}\right)^3.
 \end{split}
\end{equation*}
Thus, 
\begin{equation*}
\begin{split}
 \|E_+\|^2=\|E_-\|^2 
& \leq \frac{1}{|a|^2}
 \min\left(
 \min\left(N, \frac{1}{(1-|\zeta_-|)}\right)^3, 
 \frac{4}{|1-\zeta_+/\zeta_-|^2}F_N(|\zeta_-|)
 \right)\\
 & \leq \frac{1}{|a|^2}
 \min\left(
 \min\left(N, \frac{1}{(1-|\zeta_-|)}\right)^3, 
 \frac{4}{|1-\zeta_+/\zeta_-|^2}
 \min\left( N,\frac{2}{1-|\zeta_-|}\right)
 \right)\\
 & = \frac{1}{|a|^2}\min\left( N,\frac{2}{1-|\zeta_-|}\right)
 \min\left(N, \frac{1}{(1-|\zeta_-|)}, \frac{2}{|1-\zeta_+/\zeta_-|}
 \right)^2, 
  \end{split}
\end{equation*}
and we conclude \eqref{estgrunp.7}.
\end{proof}

In the following we concentrate on the case when $E_1$ is a true 
non-degenerate ellipse, i.e. when 
\begin{equation}\label{egnp.1}
  0 < |b| < |a|.
\end{equation}
The degeneration $\zeta_+/\zeta_- \approx 1 $ takes place near the 
focal points $z=\pm2\sqrt{ab}$ where 
$\zeta_+ \approx \zeta_- \approx \pm \sqrt{b/a}$ and hence $|\zeta_-| < 1$ 
so we are away from the degeneration $|\zeta_-| \approx 1$, which takes 
place near $E_1$. Until further notice we assume that $z$ is not in a
neighbourhood of the focal points.
\par
From \eqref{estgrunp.6}, \eqref{estgrunp.7}, we get 
\begin{equation}\label{egnp.2}
  \| E\| \leq \mO(1) F_N(|\zeta_-|),
\end{equation}
\begin{equation}\label{egnp.3}
 \|E_+\|, ~\|E_-\|\leq \mO(1)F_N(|\zeta_-|)^{\frac{1}{2}}.
\end{equation}
Here, we used that $F_{k+1}(t) \asymp \min(k+1,\frac{1}{1-t})$ for 
$0\leq t\leq 1$. 
\subsection{When one of $|\zeta _\pm|$  is larger than
  1 and the other smaller than 1.} In this case we
estimate the resolvent of $P=P_\mathrm{I}$ directly. Recall that we work with 
$P=P_\mathrm{I}=1_{[1,N]}(a\tau ^{-1}+b\tau )$, $\tau =\tau _1$ with
the identification $\ell^2([1,N])\simeq \ell^2_{[1,N]}({\bf Z})$. We
start by deriving a fairly explicit expression for the resolvent,
valid under the sole assumption that $z$ is not in the spectrum.

\medskip\par\noindent a) We first invert $a\tau ^{-1}+b\tau -z$ on
$\ell^2({\bf Z})$. This is a convolution operator and we look for a
fundamental solution $F:{\bf Z}\to {\bf C}$ solving
\begin{equation}\label{1}
(a\tau^{-1}+b\tau -z)F=\delta _0,
\end{equation}
where $\delta _0(j)=\delta _{0,j}$. As before, we 
assume that $|\zeta _+|\le |\zeta _-|$. When $|\zeta _+|<1<|\zeta _-|$
our function $F$ will belong to $\ell^1$. Try
\begin{equation}\label{2}
F(k)=c\begin{cases}\zeta _+^k,\ k\ge 0,\\
\zeta _-^k,\ k\le 0,
\end{cases}
\end{equation}
where $c$ will be determined. (\ref{1}) means that
\begin{equation}\label{3}
au(k+1)+bu(k-1)-zu(k)=\delta _{0,k},\ k\in {\bf Z}.
\end{equation}
With the choice (\ref{2}), this holds for $k\ne 0$ and for $k=0$ we
get
$$
c(a\zeta _++b\zeta _-^{-1}-z)=1,
$$
i.e.
\begin{equation}\label{4}
c=\frac{1}{a\zeta _++b/\zeta _--z}.
\end{equation}
Using that $a\zeta _\pm + b/\zeta _\pm -z=0$, we have $b/\zeta
_--z=-a\zeta _-$ and (\ref{4}) becomes
\begin{equation}\label{5}
c=\frac{1}{a(\zeta _+-\zeta _-)},
\end{equation}
assuming of course that $\zeta _+\ne \zeta _-$ which follows
from the assumption that $z$ is not in the spectrum of
$P_\mathrm{I}$. 

\par Thus, with $P_\infty =a\tau ^{-1}+b\tau $ acting on functions on ${\bf
  Z}$, we get 
\begin{equation}\label{6}\begin{split}
(P_\infty -z)\circ (F*)v=v,\ \ v\in \ell^2_\mathrm{comp}({\bf Z}),\\
(F*)\circ (P_\infty -z) u=u,\ \ u\in \ell^2_\mathrm{comp}({\bf Z}),
\end{split}\end{equation}
where $\ell^2_\mathrm{comp}$ is the space of functions on ${\bf Z}$ that vanish
outside a bounded interval, and $F*$ denotes the convolution operator,
defined by $F*v(j)=\sum_kF(j-k)v(k)$. When 
$$|\zeta _+|<1<|\zeta _-|,$$
$F$ belongs to $\ell^1$, $F*$ is bounded on $\ell^2$, (\ref{6})
extends to the case when $u,v\in \ell^2({\bf Z})$ and then expresses that
$F*$ is a bounded 2-sided inverse of $P_\infty -z:\ell^2\to \ell^2$.

For future reference we combine (\ref{2}) and (\ref{5}) to
\begin{equation}\label{7}
F(k)=\frac{1}{a(\zeta _+-\zeta _-)}\begin{cases}\zeta _+^k,\ k\ge 0,\\
\zeta _-^k,\ k\le 0.
\end{cases}
\end{equation}

\medskip
\par\noindent b) We next solve 
$$
(a\tau ^{-1}+b\tau -z)u=0\hbox{ on }{\bf Z},
$$
with one of the two sets of ``Dirichlet'' conditions,
\begin{equation}\label{L}
u(0)=1,\ u(N+1)=0
\end{equation}
or
\begin{equation}\label{R}
u(0)=0,\ u(N+1)=1
\end{equation}
Denote the solutions by $u=u_L$, $u=u_R$ respectively, when they
exist and are unique.

\par In both cases we know that $u$ has to be of the form
$$u(j)=c_+\zeta _+^j+c_-\zeta _-^j,$$
and it suffices to see when $c_\pm $ exist and are unique. After some
straight forward calculations, we get existence and uniqueness under
the condition
\begin{equation}\label{8}
\zeta _+^{N+1}-\zeta _-^{N+1}\ne 0,
\end{equation}
and then
\begin{equation}\label{9}
u_L(j)=\frac{1}{1-(\zeta _+/\zeta _-)^{N+1}}\left(\zeta _+^j-\zeta
  _+^{N+1}(1/\zeta _-)^{N+1-j} \right),
\end{equation}
\begin{equation}\label{10}
u_R(j) =\frac{1}{1-(\zeta _+/\zeta _-)^{N+1}}\left(
(1/\zeta _-)^{N+1-j}-(1/\zeta _-)^{N+1}\zeta _+^j\right) .
\end{equation}

\medskip\par\noindent c) Solution of $(P-z)u=v$ in $\ell^2([1,N])$. We
adopt the assumption (\ref{8}) from now on and recall, that this is
equivalent to the assumption that $z$ avoids the spectrum of
$P=P_\mathrm{I}$ and the two focal points. With the usual identification $\ell^2([1,N])\simeq
\ell^2_{[1,N]}({\bf Z})$ it is now clear that the unique solution is 
$$
u=\widetilde{u}_{\vert_{[1,N]}},\hbox{ where }\widetilde{u}=F*v-(F*v)(0)u_L-(F*v)(N+1)u_R.
$$

\par Let $E=(P-z)^{-1}$ and let $E(j,k)$, $1\le j,k\le N$ be the
matrix elements of $E$. Then $E(j,k)=\widetilde{u}(j)$ where
$\widetilde{u}$ is the function above associated to $v=\delta _k$. 
Writing $\zeta _{\mathrm{sgn(j)}}^j
=\zeta _+^j$ for $j\ge 0$ and $=\zeta _-^j$ for $j<0$, $|\zeta
_{\mathrm{sgn}(j)}|^j=|\zeta _{\mathrm{sgn}(j)}^j|$, we get first
$$
E(j,k)=F(j-k)-F(-k)u_L(j)-F(N+1-k)u_R(j),
$$
and after substitution of the above expressions for $F$, $u_L$ and
$u_R$,
\begin{equation}\label{11}
\begin{split}
E(j,k)=&\frac{1}{a(\zeta _+-\zeta _-)}\times \\
&\Bigg(\zeta _{\mathrm{sgn}(j-k)}^{j-k}-\frac{1}{1-\left(\frac{\zeta _+}{\zeta
    _-}\right)^{N+1}}
\left(\left(1-\left(\frac{\zeta _+}{\zeta _-} \right)^{N+1-j}
  \right)\zeta _+^j\left(\frac{1}{\zeta _-} \right)^k\right.\\
& \left. -\left(1-\left(\frac{\zeta _+}{\zeta _-} \right)^j \right)
\left(\frac{1}{\zeta _-} \right)^{N+1-j}\zeta _+^{N+1-k}
 \right)
 \Bigg) .
\end{split}
\end{equation}

\medskip
\par\noindent d) In addition to (\ref{8}), we now assume
\begin{equation}\label{12}
|\zeta _+|\le 1\le |\zeta _-|.
\end{equation}
Then we get
\begin{equation}\label{13}
\begin{split}
&|E(j,k)|\le \frac{1}{|a||\zeta _+-\zeta _-|}\times \\
&\left( |\zeta _{\mathrm{sgn}(j-k)}|^{j-k}+\frac{2}{\left|
      1-\left(\frac{\zeta _+}{\zeta _-} \right)^{N+1} \right|}
\left(|\zeta _+|^j\left(\frac{1}{|\zeta _-|}
  \right)^k+\left(\frac{1}{|\zeta _-|} \right)^{N+1-j}|\zeta _+|^{N+1-k} \right)
 \right).
\end{split}
\end{equation}

\par In the big parenthesis the first term corresponds to a convolution and
the second term corresponds to the sum of two rank 1
operators. Letting $\|\cdot \|$ denote the norm in $\ell^2$ or in
${\mathcal{L}}(\ell^2,\ell^2 )$, depending on the context, we get
\begin{equation}\label{14}
\| E\|\le \frac{1}{|a||\zeta _+-\zeta _-|}\left(\sum_{1-N}^{N-1} |\zeta
  _{\mathrm{sgn}(j)}|^j
+\frac{4}{\left| 1-\left(\frac{\zeta _+}{\zeta _-} \right)^{N+1}
  \right|}
\|1_{[1,N]}\zeta _+^\cdot\|\|1_{[1,N]}\zeta _-^{-\cdot }\| 
 \right).
\end{equation}
Recall that $F_N(t)=1+t+...+t^{N-1}$ and that
\begin{equation}\label{15}
F_N(t)\asymp \min (1/(1-t),N), 0<t\le 1.
\end{equation} 
We have
\begin{equation}\label{16}
\sum_{1-N}^{N-1} |\zeta _{\mathrm{sgn}(j)}|^j=1+|\zeta _+|F_{N-1}(|\zeta
_+|)+\frac{1}{|\zeta _-|}F_{N-1}(1/|\zeta _-|).
\end{equation}

\par Also,
$$
\| 1_{[1,N]}\zeta _+^\cdot \|^2=|\zeta _+|^2F_N(|\zeta _+|^2).
$$
Here,
$$
F_N(t^2)=\frac{1-t^{2N}}{1-t^2}=\frac{1+t^N}{1+t}F_N(t),
$$
so 
$$
F_N(t)/2\le F_N(t^2)\le F_N(t).
$$
Similarly,
$$
\| 1_{[1,N]}\zeta _-^{-\cdot }\|^2=\frac{1}{|\zeta _-|^2}F_N(1/|\zeta _-|^2)
$$
and using these facts in (\ref{14}), we get
\begin{equation}\label{17}
\begin{split}
&\| E\|\le \frac{{\mO}(1)}{|a||\zeta _+-\zeta _-|}\times \\
&\left( 1+|\zeta _+|F_N(|\zeta _+|)+\frac{1}{|\zeta
    _-|}F_N(\frac{1}{|\zeta _-|} )+\frac{|\zeta _+/\zeta
  _-|}{|1-(\zeta _+/\zeta _-)^{N+1}|}F_N(|\zeta
_+|)^{\frac{1}{2}}F_N(\frac{1}{|\zeta _-|})^{\frac{1}{2}}\right) .
\end{split}
\end{equation}

\section{Grushin problem for the perturbed operator}\label{grpp}
We interested in the following random perturbation of $P_0=P_\mathrm{I}$: 
\begin{equation}\label{grpp.0}
 P_{\delta} := P_0 + \delta Q_{\omega}, 
 \quad
 Q_{\omega}=(q_{j,k}(\omega))_{1\leq j,k\leq N}, 
\end{equation}
where $0\leq\delta\ll 1 $ and $q_{j,k}(\omega)$ are independent and 
identically distributed complex Gaussian random variables, 
following the complex Gaussian law $\mathcal{N}_{\C}(0,1)$. 
\\
\par
The Markov inequality implies that if $C_1>0$ is large enough, then 
for the Hilbert-Schmidt norm, 
\begin{equation}\label{grpp.0b}
\mathds{P}\left[ 
\Vert Q_{\omega}\Vert_\mathrm{HS}\le C_1N
\right] \ge 1-e^{-N^2}.
\end{equation}
This has already been observed by W.~Bordeaux-Montrieux in \cite{BM}.
\subsection{A general discussion}\label{grpp0}
We begin with a formal 
discussion of the natural Grushin problem for $P_{\delta}$. 
Recall from Section \ref{grunp} that the 
Grushin problem is of the form 
\begin{equation*}
 \mathcal{P}_0=\begin{pmatrix}
                 R_+ & 0 \\
                 P_0-z & R_-\\
                \end{pmatrix} 
                :~\C^N\times\C \longrightarrow \C\times\C^N,
\end{equation*}
where we added a subscript $0$ to indicate that we deal with the 
unperturbed operator. Recall that $\mathcal{P}_0$ is bijective with inverse
\begin{equation*}
 \mathcal{E}_0=\begin{pmatrix}
                 E_+^0 & E^0 \\
                 E_{-+}^0 & E_-^0\\
                \end{pmatrix}
                :~\C\times\C^N \longrightarrow \C^N\times\C,
\end{equation*}
where we added a superscript $0$ for the same reason. 
If $\delta \|Q_{\omega}\| \|E^0\| <1$, we see using a 
Neumann series that 
\begin{equation*}
 \mathcal{P}_{\delta} =
	       \begin{pmatrix}
                 R_+ & 0 \\
                 P_{\delta}-z & R_-\\
                \end{pmatrix}
                :~\C^N\times\C \longrightarrow \C\times\C^N,
\end{equation*}
is bijective and admits the inverse
\begin{equation*}
 \mathcal{E}_{\delta}=\begin{pmatrix}
                 E_+^{\delta} & E^{\delta} \\
                 E_{-+}^{\delta} & E_-^{\delta}\\
                \end{pmatrix}
                :~\C\times\C^N \longrightarrow \C^N\times\C.
\end{equation*}
where 
\begin{equation}\label{grpp.1.1}
\begin{split}
 &E_+^{\delta} 
 = 
 E_+^0 - \delta E^0Q_{\omega}E_+^0 + \delta^2(E^0Q_{\omega})^2 E_+^0 + \dots 
 = (1 + \delta E^0Q_{\omega})^{-1}E_+^0, \\
 &E_-^{\delta} 
 = 
 E_-^0 - \delta E_-^0(Q_{\omega})E^0 + \delta^2 E_-^0(Q_{\omega}E^0)^2  + \dots 
 = E_-^0(1 + \delta Q_{\omega}E^0)^{-1},\\
 &E^{\delta} 
 = 
 E^0 - \delta E^0(Q_{\omega}E^0) + \delta^2E^0(Q_{\omega}E^0)^2 + \dots 
 = E^0(1 + \delta Q_{\omega}E^0)^{-1}, \\
 &E_{-+}^{\delta} 
 = 
 E_{-+}^0 - \delta E_-^0Q_{\omega}E_+^0 + \delta^2E_-^0Q_{\omega}E^0Q_{\omega} E_+^0 + \dots \\
 &\phantom{E_{-+}^{\delta}}= E_{-+}^0 - \delta E_-^0 Q_{\omega}(1 + \delta E^0Q_{\omega})^{-1}E_+^0. 
 \end{split}
\end{equation}
One obtains the following estimates
\begin{equation}\label{grpp.1.2}
\begin{split}
 & \|E^{\delta}\| \leq 
 \frac{\| E^0\|}{1 - \delta \|Q_{\omega}\| \|E^0\|}, ~
 \|E_{\pm}^{\delta}\| \leq 
 \frac{\| E_{\pm}^0\|}{1 - \delta \|Q_{\omega}\| \|E^0\|}, \\
 & | E_{-+}^{\delta} - E_{-+}^{0}| \leq  
 \frac{\delta\| E_{+}^0\| \| E_{-}^0\| \|Q_{\omega}\|}{1 - \delta \|Q_{\omega}\| \|E^0\|}.
\end{split}
\end{equation}
Differentiating the equation $\mathcal{E}^{\delta}\mathcal{P}^{\delta}=1$ with 
respect to $\delta$ yields
\begin{equation}\label{grpp.1.3}
 \partial_{\delta}\mathcal{E}^{\delta} = 
 - \mathcal{E}^{\delta}(\partial_{\delta}\mathcal{P}^{\delta})\mathcal{E}^{\delta}
 =
 -\begin{pmatrix}
   E^\delta Q_{\omega} E_+^{\delta} & E^\delta Q_{\omega} E^{\delta} \\
   E_-^\delta Q_{\omega} E_+^{\delta} & E_-^\delta Q_{\omega} E^{\delta} \\
  \end{pmatrix}.
\end{equation}
Integrating this relation from $0$ to $\delta$ yields 
\begin{equation}\label{grpp.1.4}
 \|E^{\delta} -E^0\| \leq 
 \frac{\delta\|Q_{\omega}\| \| E^0\|^2}{(1 - \delta \|Q_{\omega}\| \|E^0\|)^2}, ~
 \|E_{\pm}^{\delta} -E_{\pm}^0\| \leq 
 \frac{\delta\|Q_{\omega}\| \| E_{\pm}^0\|\|E^0\|}{(1 - \delta \|Q_{\omega}\| \|E^0\|)^2}.
\end{equation}
\par
Since $\mathcal{P}^{\delta}$ is invertible and of finite 
rank, we know that 
\begin{equation*}
 |\partial_{\delta} \ln\det\mathcal{P}^{\delta}| 
 = |\mathrm{tr}(\mathcal{E}^{\delta}\partial_{\delta}\mathcal{P}^{\delta})|.
\end{equation*}
Letting $\|\cdot\|_{\tr}$ denote the trace class norm, we get 
\begin{equation}\label{grpp.1.5}
 |\partial_{\delta} \ln\det\mathcal{P}^{\delta}| 
 = |\mathrm{tr}(Q_{\omega}E^{\delta})|
 \leq \|Q_{\omega}\|_{\mathrm{tr}} \|E^{\delta}\|
 \leq \frac{\| E^0\| \|Q_{\omega}\|_{\mathrm{tr}}}{1 - \delta \|Q_{\omega}\| \|E^0\|},
\end{equation}
where $\|Q_{\omega}\|_{\mathrm{tr}} \leq N^{1/2}\|Q_{\omega}\|_{\HS}$. Integration from 
$0$ to $\delta$ yields
\begin{equation}\label{grpp.1.6}
 \left| \ln |\det\mathcal{E}^{\delta}| - \ln |\det\mathcal{E}^{0}| \right| = 
 \left| \ln |\det\mathcal{P}^{\delta}| - \ln |\det\mathcal{P}^{0}| \right|
 \leq 
 \frac{\delta \| E^0\| \|Q_{\omega}\|_{\mathrm{tr}}}{1 - \delta \|Q_{\omega}\| \|E^0\|}.
\end{equation}
Sharpening the assumption $\delta \|Q_{\omega}\| \|E^0\| < 1$ to 
\begin{equation}\label{grpp.1.7}
  \delta \|Q_{\omega}\| \|E^0\| < \frac{1}{2},
\end{equation}
we get
\begin{equation}\label{grpp.1.8}
 \|E^{\delta}\| \leq 2\| E^0\|, ~
 \|E_{\pm}^{\delta}\| \leq 2\| E_{\pm}^0\|,~ 
 | E_{-+}^{\delta} - E_{-+}^{0}| \leq  
 2\delta \| E_{+}^0\| \| E_{-}^0\| \|Q_{\omega}\|.
\end{equation}
By \eqref{grpp.1.3} we know that 
$\partial_{\delta}E_{-+}^{\delta} = - E_-^{\delta} Q_{\omega} E_+^{\delta}$. 
Therefore, using \eqref{grpp.1.2}, \eqref{grpp.1.4} and \eqref{grpp.1.8} we get 
\begin{equation}\label{grpp.1.9}
\begin{split}
 |\partial_{\delta}E_{-+}^{\delta} +E_-^0 Q_{\omega} E_+^0| 
  &\leq 
  \|E_-^0 Q_{\omega} \| \|E_+^{\delta}-E_+^0\| + 
  \|Q_{\omega} E_+^{\delta}  \| \|E_-^{\delta}-E_-^0\| \\
  &\leq 12 \delta \|Q_{\omega}\|^2 \|E_-^0\| \|E_+^0\| \| E^0\|.
  \end{split}
\end{equation}
By integration from $0$ to $\delta$, we conclude 
\begin{equation}\label{grpp.1.10}
 E_{-+}^{\delta} = E_{-+}^{0} - \delta E_-^0 Q_{\omega} E_+^0 
 + \mO(1)\delta^2 \|Q_{\omega}\|^2 \|E_-^0\| \|E_+^0\| \| E^0\|.
\end{equation}
\subsection{More specific estimates}\label{grpp.I}
$\phantom{.}$\\
\textbf{a) The case where $z$ is inside the ellipse $E_1$.} We adopt the 
non-degeneracy condition \eqref{egnp.1}: $0<|b|<|a|$ and keep the
assumption that $z$ avoids a neighbourhood of the focal points. 
In view of \eqref{egnp.2} and the fact that 
$\|Q_{\omega}\|_{\HS}\leq C_1 N$ (cf. \eqref{grpp.0b}) we replace 
assumption \eqref{grpp.1.7} by the stronger and more explicit 
condition  
\begin{equation}\label{grpp.1}
 \delta N F_N(|\zeta_-|) \ll 1.
\end{equation}
Notice that this is fulfilled for all $z$ inside $E_1$, if we make 
the even stronger assumption 
\begin{equation}\label{grpp.2}
 \delta N^2 \ll 1.
\end{equation}
(Recall that $N\gg 1$). 
\par 
We conclude from the discussion above and \eqref{egnp.2}, \eqref{egnp.3}:
\begin{prop}\label{grpp1}
 Let $0\leq \delta \ll 1$ satisfy \eqref{grpp.1} and let 
 $P_{\delta}$ be as in \eqref{grpp.0}, $R_{\pm}$ be as 
 in \eqref{grunp.1} and assume that $\|Q_{\omega}\|_{\HS}\leq C_1 N$ 
 (cf. \eqref{grpp.0b}). Then, 
 \begin{equation*}
 \mathcal{P}_{\delta} =
	       \begin{pmatrix}
                 R_+ & 0 \\
                 P_{\delta}-z & R_-\\
                \end{pmatrix}
                :~\C^N\times\C \longrightarrow \C\times\C^N,
\end{equation*}
is bijective with bounded inverse
\begin{equation*}
 \mathcal{E}_{\delta}=\begin{pmatrix}
                 E_+^{\delta} & E^{\delta} \\
                 E_{-+}^{\delta} & E_-^{\delta}\\
                \end{pmatrix}
                :~\C\times\C^N \longrightarrow \C^N\times\C,
\end{equation*}
where 
\begin{equation}\label{grpp.3}
 \| E^{\delta} -E^0\| \leq \mO(1)\delta N F_{N}(|\zeta_-|)^2,
\end{equation}
\begin{equation}\label{grpp.4}
 \| E^{\delta}_{\pm} -E^0_{\pm}\| \leq \mO(1)\delta N F_{N}(|\zeta_-|)^{3/2},
\end{equation}
\begin{equation}\label{grpp.5}
 E_{-+}^{\delta} 
 = 
 E_{-+}^0 - \delta E_-^0Q_{\omega}E_+^0
 +\mO(1)\left(\delta N F_{N}(|\zeta_-|)\right)^{2}.
\end{equation}
Here $E_{\pm}^0$, $E^0$, $E_{-+}^0$ are as in \eqref{estgrunp.1}, 
\eqref{estgrunp.2}, \eqref{grunp.10}.
\end{prop}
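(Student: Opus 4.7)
The plan is to reduce the proposition to a direct substitution of the explicit bounds \eqref{egnp.2}, \eqref{egnp.3} on $\|E^0\|$ and $\|E_{\pm}^0\|$ into the general perturbation identities and estimates already established in Subsection \ref{grpp0}. The only probabilistic input needed is the operator norm bound $\|Q_\omega\| \le \|Q_\omega\|_{\HS} \le C_1 N$ from \eqref{grpp.0b}.

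First I would verify the smallness condition \eqref{grpp.1.7}. Combining $\|Q_\omega\| \le C_1 N$ with $\|E^0\| \le \mO(1) F_N(|\zeta_-|)$ gives
$$ \delta \|Q_\omega\| \|E^0\| \le \mO(1)\, \delta N F_N(|\zeta_-|), $$
which is $\ll 1$ by assumption \eqref{grpp.1}, so \eqref{grpp.1.7} holds for $N$ sufficiently large. This justifies bijectivity of $\mathcal{P}_\delta$ via the Neumann series in \eqref{grpp.1.1} and ensures that the denominators $(1-\delta \|Q_\omega\|\|E^0\|)^2$ in \eqref{grpp.1.4} are bounded below by $1/4$.

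To obtain \eqref{grpp.3} and \eqref{grpp.4}, I would then apply \eqref{grpp.1.4} directly, using \eqref{egnp.2}, \eqref{egnp.3}:
\begin{align*}
\|E^\delta - E^0\| &\le 4\delta\|Q_\omega\|\|E^0\|^2 \le \mO(1)\,\delta N F_N(|\zeta_-|)^2, \\
\|E_{\pm}^\delta - E_{\pm}^0\| &\le 4\delta\|Q_\omega\|\|E_{\pm}^0\|\|E^0\| \le \mO(1)\,\delta N F_N(|\zeta_-|)^{3/2}.
\end{align*}
For \eqref{grpp.5} I would invoke \eqref{grpp.1.10}, which already isolates the linear term $-\delta E_-^0 Q_\omega E_+^0$, and estimate the remainder using the same substitutions:
$$ \delta^2 \|Q_\omega\|^2 \|E_-^0\|\|E_+^0\|\|E^0\| \le \mO(1)\,\delta^2 N^2 F_N(|\zeta_-|)^2 = \mO(1)\bigl(\delta N F_N(|\zeta_-|)\bigr)^2, $$
which is exactly the claimed error.

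There is essentially no serious obstacle here: the proposition is a bookkeeping exercise compiling the general abstract estimates of Subsection \ref{grpp0} with the specific norm bounds of the previous section. The only point requiring mild care is matching the asymmetric powers of $F_N(|\zeta_-|)$ (the full power from $\|E^0\|$ and the half power from $\|E_\pm^0\|$) so that the resulting exponents $2$, $3/2$ and $2$ in \eqref{grpp.3}--\eqref{grpp.5} come out correctly, and replacing the operator norm by the Hilbert--Schmidt norm via $\|Q_\omega\| \le \|Q_\omega\|_{\HS}$ wherever needed.
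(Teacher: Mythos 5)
Your proposal is correct and matches the paper's own argument: the paper simply states ``We conclude from the discussion above and (\ref{egnp.2}), (\ref{egnp.3})'', i.e.\ it obtains the proposition exactly as you do, by checking that (\ref{grpp.1}) together with $\|Q_\omega\|\le\|Q_\omega\|_{\HS}\le C_1N$ yields (\ref{grpp.1.7}), and then substituting $\|E^0\|\le\mO(1)F_N(|\zeta_-|)$ and $\|E_\pm^0\|\le\mO(1)F_N(|\zeta_-|)^{1/2}$ into (\ref{grpp.1.4}) and (\ref{grpp.1.10}). The exponent bookkeeping ($2$, $3/2$, $2$) is exactly as you describe.
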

Using \eqref{estgrunp.2}, \eqref{estgrunp.3} we get with $Q_{\omega}=(q_{j,k}(\omega))$, 
\begin{equation}\label{grpp.6}
\begin{split}
 E_{-+}^{\delta} =  \frac{\zeta _+^{N+1}-\zeta _-^{N+1}}{a(\zeta _+-\zeta _-)}
 &-\delta \sum_{j,k=1}^N q_{j,k}(\omega)
 \frac{\zeta_+^{N+1-j}-\zeta_-^{N+1-j}}{a(\zeta_+-\zeta_-)}
 \frac{\zeta_+^{k}-\zeta_-^{k}}{a(\zeta_+-\zeta_-)}\\
 &+\mO(1)\left(\delta NF_{N}(|\zeta_-|)\right)^{2}.
\end{split}
\end{equation}
From \eqref{grpp.1.6}, we get 
\begin{prop}
 Under the same assumptions as in Proposition \ref{grpp1}, we have 
 \begin{equation}\label{grpp.7}
  \left| \ln |\det \mathcal{P}_{\delta}| - \ln|\det\mathcal{P}_0|\right| 
  \leq 
  \mO(1)\delta N^{3/2} F_{N}(|\zeta_-|).
 \end{equation}
\end{prop}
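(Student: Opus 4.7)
The plan is to apply the general bound \eqref{grpp.1.6} derived earlier, plugging in the specific estimates available under the hypotheses of Proposition \ref{grpp1}. Recall that \eqref{grpp.1.6} states
$$
\left| \ln |\det\mathcal{P}^{\delta}| - \ln |\det\mathcal{P}^{0}| \right|
\leq
\frac{\delta \| E^0\| \|Q_{\omega}\|_{\tr}}{1 - \delta \|Q_{\omega}\| \|E^0\|},
$$
so the task reduces to controlling the three quantities $\|E^0\|$, $\|Q_{\omega}\|_{\tr}$, and the denominator.

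First, I would bound the numerator. From \eqref{egnp.2} we have $\|E^0\| \leq \mO(1)\, F_N(|\zeta_-|)$. For the trace-class norm of the Gaussian matrix, I would use the inequality $\|Q_\omega\|_{\tr} \leq N^{1/2}\|Q_\omega\|_{\HS}$ (since $Q_\omega$ has rank at most $N$ and Cauchy--Schwarz gives $\sum \sigma_j \leq N^{1/2}(\sum \sigma_j^2)^{1/2}$), combined with the probabilistic bound $\|Q_\omega\|_\HS \leq C_1 N$ from \eqref{grpp.0b}. This yields $\|Q_\omega\|_{\tr} \leq C_1 N^{3/2}$, and multiplying the three factors produces precisely $\mO(1)\delta N^{3/2} F_N(|\zeta_-|)$ in the numerator.

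Next I would verify that the denominator is bounded below by a positive constant. Since $\|Q_\omega\| \leq \|Q_\omega\|_\HS \leq C_1 N$ and $\|E^0\| \leq \mO(1) F_N(|\zeta_-|)$, we have $\delta \|Q_\omega\| \|E^0\| \leq \mO(1)\delta N F_N(|\zeta_-|)$. The hypothesis \eqref{grpp.1} of Proposition \ref{grpp1} says exactly that this quantity is $\ll 1$, so $1 - \delta \|Q_\omega\| \|E^0\| \geq 1/2$ for $N$ large enough. Combining the numerator bound with this lower bound on the denominator gives the claimed estimate \eqref{grpp.7}.

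There is no real obstacle here: the proposition is essentially a direct substitution of the previously established resolvent estimate \eqref{egnp.2} and the Gaussian tail bound \eqref{grpp.0b} into the abstract inequality \eqref{grpp.1.6}. The only point worth writing down carefully is the passage from the Hilbert--Schmidt norm to the trace-class norm via the factor $N^{1/2}$, which is what accounts for the exponent $3/2$ in $N$ on the right hand side of \eqref{grpp.7}.
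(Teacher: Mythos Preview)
Your proposal is correct and follows exactly the paper's approach: the proposition is obtained directly from \eqref{grpp.1.6} together with the bound $\|E^0\|\le \mO(1)F_N(|\zeta_-|)$ from \eqref{egnp.2}, the trace-class estimate $\|Q_\omega\|_{\tr}\le N^{1/2}\|Q_\omega\|_{\HS}\le \mO(N^{3/2})$, and the smallness assumption \eqref{grpp.1} to control the denominator. The paper merely states ``From \eqref{grpp.1.6}, we get\ldots'' and remarks on the $N^{1/2}$ factor, so your write-up is in fact more explicit than the original.
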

Here we also used that 
$\|Q_{\omega}\|_{\tr} \leq N^{1/2}\|Q_{\omega}\|_{\HS} \leq \mO(N^{3/2})$. 
Also recall that $\det \mathcal{P}_0 = a^{N+1}$. 
\\
\\
\textbf{b) The case when $z$ belongs to a compact set outside $E_1$.} We 
have seen in \eqref{17} that 
\begin{equation}\label{grpp.8}
 \|(P-z)^{-1}\| \leq \mO(1)F_N\left(\frac{1}{|\zeta_-|}\right).
\end{equation}
Assume 
\begin{equation}\label{grpp.9}
 \delta N F_N\left(\frac{1}{|\zeta_-|}\right) \ll 1,
\end{equation}
which like (\ref{grpp.1}) is a weaker condition than (\ref{grpp.2}).
Then, $\|\delta Q_{\omega} (P-z)^{-1}\| \leq \mO(1)\delta N F_N \ll 1$ 
and $P_{\delta}-z$ is bijective satisfying
\begin{equation}\label{grpp.10}
 \|(P_{\delta}-z)^{-1}\| \leq \mO(1) F_N\left(\frac{1}{|\zeta_-|}\right),
\end{equation}
\begin{equation}\label{grpp.11}
 \|(P_{\delta}-z)^{-1}-(P-z)^{-1}\| \leq \mO(1) \delta N (F_N(|\zeta_-|^{-1}))^2,
\end{equation}
In analogy with \eqref{grpp.1.6}, we have 
\begin{equation*}
 \left| \ln |\det(P_{\delta}-z)| - \ln|\det(P-z)|\right| 
 \leq 
 \mO(1)\delta \|Q_{\omega}\|_{\tr} F_N\left(\frac{1}{|\zeta_-|}\right),
\end{equation*}
leading to 
\begin{equation}\label{grpp.12}
 \left| \ln |\det(P_{\delta}-z)| - \ln|\det(P-z)|\right| 
 \leq 
 \mO(\delta)N^{3/2}F_N\left(\frac{1}{|\zeta_-|}\right),
\end{equation}
under the assumption $\|Q_{\omega}\|_{\HS}\leq \mO(N)$. 
Recall that $\det(P_0 -z)$ is given by \eqref{grunp.13}. 
\\ \\
\textbf{c) Estimation of the probability that $E_{-+}^{\delta}$ is small.} 
We now return to the situation in a), i.e. when $z$ is inside the ellipse $E_1$, 
so that $|\zeta_-|\leq 1$. We assume \eqref{grpp.1} (to be strengthened later on). 
We shall follow Section 13.5 in \cite{Sj15} with only small changes. 
Write \eqref{grpp.6} as 
\begin{equation}\label{grpp.13} 
 E_{-+}^{\delta} = 
 \frac{\zeta_+^{N+1} - \zeta_-^{N+1}}{a(\zeta_+ - \zeta_-)} 
 - \delta (Q_{\omega}| \overline{Z}) + \mO(1)(\delta N F_N)^2,
\end{equation}
$F_N=F_N(|\zeta_-|)$, where 
\begin{equation}\label{grpp.14}
 Z = \left( 
 \frac{\zeta_+^{N+1-j} - \zeta_-^{N+1-j}}{a(\zeta_+ - \zeta_-)} 
 \frac{\zeta_+^{k} - \zeta_-^{k}}{a(\zeta_+ - \zeta_-)} 
 \right)_{1\leq j,k\leq N}.
\end{equation}
In the following we often write $| \cdot|$ for the Hilbert-Schmidt norm 
(i.e. the $l^2$-norm of the matrix). Write 
\begin{equation*}
 Z=\left( 
 F_{N+1-j}(\zeta_+/\zeta_-)F_{k}(\zeta_+/\zeta_-) \zeta_-^{N-j+k-1}
 \right)_{1\leq j,k\leq N}
\end{equation*}
and assume 
\begin{equation}\label{grpp.15}
 z\notin \text{neigh}([-2\sqrt{ab},2\sqrt{ab}],\C)
\end{equation}
so that 
\begin{equation}\label{grpp.16}
 |\zeta_+/\zeta_-| \leq 1 - \frac{1}{\mO(1)}.
\end{equation}
(In fact, using that $\zeta _-=b/(a\zeta _+)$, the assumption $|\zeta
_-|=|\zeta _+|$ leads to $\zeta _+=\sqrt{b/a}e^{i\theta }$, for some
$\theta\in {\bf R}$, so $z=a\zeta _++b/\zeta _+=2\sqrt{ab}\cos \theta
$ belongs to the focal segment.)
Then $|F_{N+1-j}(\zeta_+/\zeta_-)|, |F_k(\zeta_+/\zeta_-)| \asymp 1$ and 
a straight forward calculation shows that 
\begin{equation}\label{grpp.17}
 \frac{1}{\mO(1)} F_N(|\zeta_-|) \leq |Z| \leq \mO(1)F_N(|\zeta_-|).
\end{equation}
Working still under the assumption that $|Q_{\omega}| \leq \mO(N)$, we 
get (cf. \eqref{grpp.1.8})
\begin{equation}\label{grpp.18}
 |E_{-+}^{\delta} - E_{-+}^0 | \leq \mO(1)\delta N F_N(|\zeta_-|).
\end{equation}
From \eqref{grpp.13} and the Cauchy inequalities, we get 
\begin{equation}\label{grpp.19}
 d_Q E_{-+}^{\delta} = \delta |Z| (dQ|e_1) + \mO(N^{-1})(\delta N F_{N})^2
\end{equation}
in $\C^{N^2}$, where 
\begin{equation}\label{grpp.20}
 e_1 = \frac{1}{|Z|}\overline{Z}.
\end{equation}
Complete $e_1$ into an orthonormal basis $e_1,e_2,\dots,e_{N^2}$ in $\C^{N^2}$ 
and write 
\begin{equation*}
 Q = Q' + Q_1 e_1, \quad 
 Q' = \sum_2^{N^2} Q_k e_k \in (e_1)^{\perp}.
\end{equation*}
Then \eqref{grpp.13}, \eqref{grpp.19} read 
\begin{equation}\label{grpp.21}
 E_{-+}^{\delta} = E_{-+}(0) + \delta |Z| Q_1 + \mO(1)(\delta N F_N)^2,
\end{equation}
\begin{equation}\label{grpp.22}
 d_Q E_{-+}^{\delta} = \delta |Z| dQ_1 + \mO(N^{-1})(\delta N F_N)^2.
\end{equation}
\par
As in \cite[Chapter  13]{Sj15} we can extend $Q\mapsto  E_{-+}^{\delta}(Q)$ 
to a smooth function $F:\C^{N^2}\to \C$ such that 
\begin{equation}\label{grpp.23}
 F(Q) = E_{-+}(0) + \delta |Z|Q_1 + \mO(1)(\delta N F_N)^2 
      =: F(0) +\delta|Z|f(Q)
\end{equation}
\begin{equation}\label{grpp.24}
 d_Q F(Q) = \delta |Z| dQ_1 + \mO(N^{-1})(\delta N F_N)^2 
\end{equation}
and such that the remainders vanish outside $B_{\C^{N^2}}(0,2C_1N)$, where 
$B_{\C^{N^2}}(0,C_1N)$ is the ball of validity for \eqref{grpp.21}, \eqref{grpp.22}. 
The function $f$ satisfies
\begin{equation}\label{grpp.25}
 f(Q)= Q_1 + \mO(\delta N^2 F_N),
\end{equation}
\begin{equation}\label{grpp.26}
 d_Qf= dQ_1 + \mO(\delta N F_N).
\end{equation}
From the assumption \eqref{grpp.1} it follows that the map 
$\C \ni Q_1 \mapsto f(Q_1,Q')\in\C$ is bijective for every 
$Q'$ and has a smooth inverse $g=g(\zeta,Q')$, satisfying 
\begin{equation}\label{grpp.27}
 g(\zeta,Q') = \zeta + \mO(\delta N^2  F_N),
\end{equation}
\begin{equation}\label{grpp.28}
 d_{\zeta,Q'} g(\zeta,Q') = d\zeta + \mO(\delta N  F_N).
\end{equation}
\par
Let $\mu(d\zeta)$ be the direct image under $f$ of the Gaussian 
measure $\pi^{-N^2}\e^{-|Q|^2}L(dQ)$. We study $\mu$ in $D(0,C)$ 
for any fixed $C>0$. For $\varphi\in\mathcal{C}_0(D(0,C))$, we get 
\begin{equation*}
 \begin{split}
  &\int\varphi(\zeta)\mu(d\zeta) = 
  \int \varphi(f(Q))\pi^{-N^2}\e^{-|Q|^2}L(dQ) \\
  &=\int_{\C^{N^2-1}}\pi^{1-N^2}\e^{-|Q'|^2}
    \left(\int_{\C} \pi^{-1}\e^{-|Q_1|^2}\varphi(f(Q))L(dQ_1)\right)L(dQ')
    \\
  &=\int_{\C^{N^2-1}}\pi^{1-N^2}\e^{-|Q'|^2}
    \left[
    \int_{\C} \pi^{-1}\e^{-|g(\zeta,Q')|^2}\varphi(\zeta) L(d_\zeta g)
    \right]
    L(dQ'), 
  \end{split}
\end{equation*}
where
$$L(d_\zeta g)=L(dQ_1)=\det\left(\frac{\partial(Q_1,\overline{Q_1})}{\partial(\zeta,\overline{\zeta})}\right)
    L(d\zeta ).$$
We get for $\varphi \in \mathcal{C}_0(D(0,C))$, 
\begin{equation*}
 \begin{split}
 & \int \varphi(\zeta)\mu(d\zeta) \\
 & = \int_{\C}\varphi(\zeta) 
     \left(
     \int_{\C^{N^2-1}}
     \pi^{-1}\e^{-| g(\zeta ,Q')|^2}
     \pi^{1-N^2}\e^{-|Q'|^2}L(dQ')
    \right)L(d_\zeta g).
 \end{split}
\end{equation*}
so that in $D(0,C)$ 
\begin{equation}\label{grpp.29}
 \mu(d\zeta) = 
 \left( \int_{\C^{N^2-1}}(
     \pi^{-1}\e^{-| g(\zeta ,Q')|^2}
     \pi^{1-N^2}\e^{-|Q'|^2}L(dQ')\right)
     L(d_\zeta g)
\end{equation}
We conclude that for $|\zeta_0|, r\leq \mO(1)$, the probability that 
$|Q|\leq C_1 N$ and $f(Q) \in D(\zeta_0,r)$ is bounded from above by
$1-e^{-N^2}$ plus
\begin{equation}\label{grpp.30}
 \int_{{\bf C}^{N^2-1}}\int_{\zeta \in D(\zeta _0,r)}\pi
 ^{-1}e^{-|g(\zeta ,Q')|^2}L(d_\zeta  g)\pi ^{1-N^2}e^{-|Q'|^2}L(dQ').
\end{equation}
From (\ref{grpp.28}) we infer that 
$$
\{ g(\zeta ,Q');\, \zeta \in D(\zeta _0,r) \}\subset D(g(\zeta
_0,Q'),\widetilde{r}),\ \ \widetilde{r}=(1+{\mO}(\delta NF_N))r
$$
and the last integral is $\le$
$$
\int_{{\bf C}^{N^2-1}}\int_ {D(g(\zeta _0,Q'),\widetilde{r})} \pi ^1
e^{-|\omega |^2}L(d\omega )\pi ^{1-N^2}e^{-|Q'|^2}L(dQ').
$$
Here the inner integral is 
\begin{equation}\label{grpp.31}
 \leq \int_{D(0,\widetilde{r})} \frac{1}{\pi} \e^{-|\omega|^2} L(d\omega) 
 = 1- \e^{-\widetilde{r}^2}. 
\end{equation}
In fact, by rotation symmetry, we may assume that $\zeta_0=t \geq 0$ and 
by Fubini's theorem, we are reduced to show that $F(t) \leq F(0)$, where 
\begin{equation*}
 F(t) = \int_{t-\widetilde{r}}^{t+\widetilde{r}} \e^{-s^2} ds. 
\end{equation*}
It then suffices to observe that $F'(t)\leq 0$. 
\par
Thus the integral in (\ref{grpp.30}) is bounded by $(1+{\mO}
(F_N\delta N))(1-\e^{-\widetilde{r}^2})$. In terms of $E_{-+}^{\delta}$, 
we get under the assumption \eqref{grpp.1}: 
\begin{lemma}\label{grpp3}
 We recall \eqref{grpp.23}. For $0\leq t$, $|E_{-+}^0|\leq C\delta F_N(|\zeta_-|)$, 
 the probability that $|Q|\leq C_1 N$ and $|E_{-+}^{\delta}| \leq t$ is 
 \begin{equation*}
  \leq e^{-N^2} + (1 + \mO(F_N N \delta))\left(1 - \exp\left[-\left(\frac{t}{\delta|Z|}\right)^2\right]\right).
 \end{equation*}
\end{lemma}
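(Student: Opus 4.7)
The plan is to assemble the ingredients developed in (\ref{grpp.23})--(\ref{grpp.31}) into a direct bound on the probability. First, using (\ref{grpp.23}), on $\{|Q|\leq C_1 N\}$ the inequality $|E_{-+}^\delta|\leq t$ is equivalent to $f(Q)\in D(\zeta_0,r)$ with
\begin{equation*}
\zeta_0 = -\frac{E_{-+}^0}{\delta|Z|},\qquad r=\frac{t}{\delta|Z|}.
\end{equation*}
The hypothesis $|E_{-+}^0|\leq C\delta F_N(|\zeta_-|)$ combined with the lower bound $|Z|\geq F_N(|\zeta_-|)/\mO(1)$ from (\ref{grpp.17}) yields $|\zeta_0|\leq \mO(1)$. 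When $r>\mO(1)$ the bound is trivial since $1-\exp(-r^2)$ is already close to $1$, so I may assume $r\leq \mO(1)$, placing us in the regime where the computation leading to (\ref{grpp.30}) applies.

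Next, I would invoke the direct image formula (\ref{grpp.29}): the probability $P(|Q|\leq C_1 N,\ |E_{-+}^\delta|\leq t)$ is bounded by $e^{-N^2}$ (absorbing the exceptional event $\{|Q|>C_1 N\}$ via (\ref{grpp.0b})) plus $\mu(D(\zeta_0,r))$, which equals
\begin{equation*}
\int_{\C^{N^2-1}}\pi^{1-N^2}e^{-|Q'|^2}\left(\int_{D(\zeta_0,r)}\pi^{-1}e^{-|g(\zeta,Q')|^2}L(d_\zeta g)\right)L(dQ').
\end{equation*}
Using (\ref{grpp.28}) to bound $g(\cdot,Q')$ as a near-identity map, the inner integral is majorised by $\int_{D(g(\zeta_0,Q'),\widetilde r)}\pi^{-1}e^{-|\omega|^2}L(d\omega)$ with $\widetilde r=(1+\mO(\delta N F_N))r$. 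By rotation invariance I may assume $g(\zeta_0,Q')\geq 0$ is real, and a Fubini reduction shows this is in turn bounded by $1-e^{-\widetilde r^2}$: for any fixed $R>0$, the one-dimensional slice $s\mapsto \int_{s-R}^{s+R}e^{-u^2}\,du$ is non-increasing on $s\geq 0$ since its derivative is $e^{-(s+R)^2}-e^{-(s-R)^2}\leq 0$.

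Finally, to convert $\widetilde r^2$ back to $r^2$, I use the elementary inequality $1-e^{-x}\geq xe^{-x}$, valid for $x\geq 0$, at $x=r^2$. Since $\widetilde r^2=r^2+r^2\cdot\mO(\delta N F_N)$,
\begin{equation*}
1-e^{-\widetilde r^2}\leq (1-e^{-r^2})+r^2 e^{-r^2}\,\mO(\delta NF_N)\leq (1+\mO(\delta NF_N))(1-e^{-r^2}),
\end{equation*}
and combining this with the $(1+\mO(\delta NF_N))$ prefactor already produced by the Jacobian $L(d_\zeta g)$ (the two factors merge under (\ref{grpp.1})) gives the conclusion of the lemma.

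The only genuinely analytic step is the unimodality argument in the second paragraph; everything else is bookkeeping around the perturbative estimates (\ref{grpp.27})--(\ref{grpp.28}) and the change of variables $\omega=g(\zeta,Q')$ which sets up the Gaussian integral. The main obstacle, if any, is ensuring that all the $\mO(\delta NF_N)$ error factors compound correctly under the small-coupling hypothesis (\ref{grpp.1}), which is routine but requires attention.
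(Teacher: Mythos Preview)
Your argument is correct and follows the paper's own route (the discussion from (\ref{grpp.23}) through (\ref{grpp.31}) \emph{is} the proof of the lemma); in fact you make explicit the passage from $\widetilde r$ back to $r$ via $1-e^{-x}\ge xe^{-x}$, which the paper leaves implicit. One small correction: after the substitution $\omega=g(\zeta,Q')$ the measure $L(d_\zeta g)$ becomes $L(d\omega)$ \emph{exactly}, so there is no separate Jacobian prefactor $(1+\mO(\delta NF_N))$ to merge---the only such factor is the one you produce in converting $\widetilde r^2$ to $r^2$.
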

From the bound 
\begin{equation}\label{grpp.32}
 |Q|\leq C_1 N, 
\end{equation}
that we adopt from now on, and the Cauchy-Schwartz inequality for the singular values of $Q$, we 
know that
\begin{equation}\label{grpp.33}
 \|Q\|_{\tr} \leq C_1 N^{3/2}.
\end{equation}
\section{Counting eigenvalues}\label{evI}
\subsection{Estimates on $\det(P_{\delta}-z)$ inside $E_1$}
In this section we assume (\ref{grpp.2}), implying (\ref{grpp.1}) when
$|\zeta _-|\le 1$.
We identify the eigenvalues of $P_{\delta}$ with the zeros of 
the function
\begin{equation}\label{evI.0}
 D_{\delta}(z)=\det(P_{\delta} -z).
\end{equation}
We sum up the various estimates and identities for this function: 
\par
When $\delta=0$, we have \eqref{grunp.13}
\begin{equation}\label{evI.1}
 \det(P_{0}-z ) = (-a)^{N} \frac{\zeta_+^{N+1}-\zeta_-^{N+1}}{\zeta_+ - \zeta_-} 
                = (-a)^{N} \zeta_-^{N} F_{N+1}(\zeta_+/\zeta_-),
\end{equation}
and for the Grushin problem \eqref{grunp.2} we have \eqref{grunp.12}: 
\begin{equation}\label{evI.2}
 \det\mathcal{P}_0(z) = a^{N+1} 
\end{equation}
and \eqref{grunp.10}
\begin{equation}\label{evI.3}
 E_{-+}^{0}(z) = \frac{\zeta_+^{N+1}-\zeta_-^{N+1}}{a(\zeta_+ - \zeta_-)} 
	       = \frac{\zeta_-^N}{a}F_{N+1}(\zeta_+/\zeta_-).
\end{equation}
For $z$ in the interior of $E_1$, by \eqref{grpp.1}, 
$\delta N F_N(|\zeta_-|)\ll 1$ and the assumption that $\|Q\|_{\HS}\leq C_1N$ 
we have \eqref{grpp.7}: 
\begin{equation}\label{evI.4}
 \ln|\det \mathcal{P}_{\delta}| = \ln|\det\mathcal{P}_0| + \mO(1)\delta N^{3/2} F_N(|\zeta_-|). 
\end{equation}
We also have the general identity (cf. \eqref{grunp.11}) 
\begin{equation}\label{evI.5}
 \det(P_{\delta}-z) = (-1)^N E_{-+}^{\delta}(z)\det \mathcal{P}_{\delta}(z).
\end{equation}
From \eqref{grpp.5}, \eqref{grpp.18} and \eqref{evI.3}, we infer that 
\begin{equation}\label{evI.6}
 |E_{-+}^{\delta}| \leq \frac{|\zeta_-|^N}{a}| F_{N+1}(\zeta_+/\zeta_-)| +
 \mO (1) \delta N F_{N}(|\zeta_-|).
\end{equation}
We will also assume that $z\notin \text{neigh}([-2\sqrt{ab},2\sqrt{ab}],\C)$ so that 
$|\zeta_+|\leq (1 - 1/\mO(1))|\zeta_-|$. Then \eqref{evI.6} 
implies that 
\begin{equation}\label{evI.7}
 |E_{-+}^{\delta}| \leq \mO(1), 
\end{equation}
and \eqref{evI.2},\eqref{evI.4}, \eqref{evI.5} give 
\begin{equation}\label{evI.8}
 \ln|\det(P_{\delta} -z)| \leq (N+1)\ln |a| + \mO(1)(1 + \delta N^{3/2} F_N(|\zeta_-|)).
\end{equation}
\par
Still under the assumption that $z$ is in the interior of $E_1$, we give a probabilistic lower 
bound on $\ln|\det(P_{\delta} -z)|$, starting from
\begin{equation}\label{evI.9}
 \begin{split}
  \ln|\det(P_{\delta} - z)| &= \ln |\det \mathcal{P}_{\delta}| 
    +\ln|E_{-+}^{\delta}| \\
  & \geq \ln| \det\mathcal{P}_0| + \ln|E_{-+}^{\delta}| 
    - \mO(1)\delta N^{3/2} F_N(|\zeta_-|) \\
  & = (N+1)\ln|a| + \ln|E_{-+}^{\delta}| - \mO(1)\delta N^{3/2} F_N(|\zeta_-|).
 \end{split}
\end{equation}
In order to apply Lemma \ref{grpp3}, we analyze the condition
\begin{equation}\label{evI.10}
 |E_{-+}^0(z)| \leq C\delta F_N(|\zeta_-|),
\end{equation}
which by \eqref{evI.3} amounts to 
\begin{equation*}
 |\zeta_-|^N| F_{N+1}(\zeta_+/\zeta_-)| \leq C\delta F_{N}(|\zeta_-|).
\end{equation*}
Since $F_{N+1}(\zeta_+/\zeta_-) = \mO(1)$ (by the assumption that $z$ avoids 
a neighborhood of the focal segment), this would follow from 
\begin{equation}\label{evI.11}
 |\zeta_-|^N \leq C\delta F_{N}(|\zeta_-|).
\end{equation}
Recall that
\begin{equation*}
 F_{N}(|\zeta_-|) \asymp \min\left( N, \frac{1}{1-|\zeta_-|}\right).
\end{equation*}
We know by (\ref{grpp.2}) that $\delta  F_{N}(|\zeta_-|) \ll N^{-1}$ so 
we are in the region where 
\begin{equation*}
 |\zeta_-|^N \ll \frac{1}{N},
\end{equation*}
i.e.
\begin{equation*}
 \ln|\zeta_-| \leq \frac{\ln(N^{-1}) - (\gg 1)}{N},
\end{equation*}
where $(\gg 1)$ indicates a sufficiently large constant 
and hence 
\begin{equation*}
 1 - |\zeta_-| \geq \frac{\ln N + (\gg 1)}{N}.
\end{equation*}
In this region $F_{N}(|\zeta_-|) =\frac{1}{1-|\zeta_-|}$, and to understand 
\eqref{evI.11} amounts to understanding for which $s$ ($=|\zeta_-|$) in 
$]\frac{1}{\mO(1)}, 1- \frac{\ln N + (\gg 1)}{N} ]$ we have 
\begin{equation}\label{evI.12}
 m(s)\leq C\delta,
\end{equation}
where 
\begin{equation}\label{evI.13}
 m(s) = s^N(1 -s ), \quad 0\leq s \leq 1. 
\end{equation}
This function increases from $s=0$ to
$s=s_{\max} = \frac{N}{N+1}= 1 - \frac{1}{N+1}$, and then 
decreases. We have
\begin{equation}\label{evI.14}
 m(s_{\max}) = \left(1 - \frac{1}{N+1}\right)^N \frac{1}{N+1} 
	     = \frac{1+\mO\!\left(\frac{1}{N}\right)}{\e N}
\end{equation}
while $\delta \ll \frac{1}{N}$. The solution $s=s_{\delta} < s_{\max}$ of
\begin{equation}\label{evI.14b}
 m(s) =C\delta
\end{equation}
satisfies 
\begin{equation*}
 s^N \frac{1}{N} \leq s^N (1 -s) = C\delta \leq s^N, 
\end{equation*}
so 
\begin{equation}\label{evI.15}
 (C\delta)^{\frac{1}{N}} \leq s_{\delta} \leq (C N \delta)^{\frac{1}{N}}.
\end{equation}
We now apply Lemma \ref{grpp3} to \eqref{evI.9} and get 
\begin{prop}\label{evI1}
 Restrict $z$ to the regions inside $E_1$, where 
 $\delta N F_N(|\zeta_-|) \ll 1$, $|\zeta_-|\leq s_{\delta}$ as in 
 \eqref{evI.14b}, \eqref{evI.15}. Then for each such $z$ and for 
 $t\leq C\delta F_N(|\zeta_-|)$, we have 
 \begin{equation}\label{evI.16}
  \ln|\det(P_{\delta}-z)| \geq (N+1)\ln|a| + \ln t - 
   \mO(1)\delta N^{3/2} F_N(|\zeta_-|)
 \end{equation}
 with probability
 \begin{equation}\label{evI.17}
  \begin{split}
  &\geq 1 - (1 + \mO(\delta N F_N))
  \left(1 -\exp\left(-\left[\frac{t}{\delta |Z|}\right]^2\right)\right)
  -\e^{-N^2}  \\
  & \geq 1 - \frac{t^2}{\mO(1)\delta^2 F_N(|\zeta_-|)^2} -\e^{-N^2}
  \end{split}
 \end{equation}
\end{prop}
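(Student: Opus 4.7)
The plan is to combine the deterministic factorisation lower bound \eqref{evI.9} with the probabilistic small-ball estimate for $E_{-+}^\delta$ supplied by Lemma \ref{grpp3}. There is essentially no new analytic content beyond these two ingredients; the content of the proposition lies in identifying the parameter regime in which the hypothesis of Lemma \ref{grpp3} takes the explicit form $|\zeta_-|\le s_\delta$.

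First, I would verify that the hypothesis $|E_{-+}^0(z)|\le C\delta F_N(|\zeta_-|)$ required by Lemma \ref{grpp3} is, for $z$ away from the focal segment, equivalent to $|\zeta_-|\le s_\delta$. Using \eqref{evI.3} together with $|F_{N+1}(\zeta_+/\zeta_-)|=\mO(1)$ (a consequence of \eqref{grpp.16}), the hypothesis reduces to $|\zeta_-|^N\le \mO(1)\delta F_N(|\zeta_-|)$. Under the running assumption $\delta N F_N(|\zeta_-|)\ll 1$, one is in the regime where $1-|\zeta_-|\gtrsim (\ln N)/N$ and hence $F_N(|\zeta_-|)\asymp 1/(1-|\zeta_-|)$, so the hypothesis becomes $m(|\zeta_-|)\le \mO(1)\delta$ with $m(s)=s^N(1-s)$. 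By the monotonicity of $m$ on $[0,s_{\max}]$ and the definition of $s_\delta<s_{\max}$ in \eqref{evI.14b}, \eqref{evI.15}, this is equivalent to $|\zeta_-|\le s_\delta$.

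Next, I would apply Lemma \ref{grpp3} directly: for $t\le C\delta F_N(|\zeta_-|)$, the event
$$\Omega_t(z)=\{\|Q_\omega\|_{\HS}\le C_1 N\}\cap \{|E_{-+}^\delta(z)|\ge t\}$$
has probability at least $1-e^{-N^2}-(1+\mO(\delta N F_N))(1-\exp(-[t/(\delta|Z|)]^2))$. On $\Omega_t(z)$, combining \eqref{evI.9} with $\ln|E_{-+}^\delta|\ge \ln t$ immediately yields \eqref{evI.16}. For the simplified second form of the probability bound in \eqref{evI.17}, I would use the elementary inequality $1-e^{-x}\le x$ with $x=[t/(\delta|Z|)]^2$ (which is $\mO(1)$ in the allowed range of $t$), together with the two-sided comparison $|Z|\asymp F_N(|\zeta_-|)$ from \eqref{grpp.17}; the factor $(1+\mO(\delta N F_N))$ is bounded under the running hypothesis and can be absorbed into the $\mO(1)$ in the denominator.

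The only step requiring a bit of care is the translation between the hypothesis of Lemma \ref{grpp3} and the explicit parameter region $|\zeta_-|\le s_\delta$: one must track the two competing factors in $m(s)=s^N(1-s)$ and check that the regime $\delta F_N\ll 1/N$ (forced by \eqref{grpp.1}) automatically places $|\zeta_-|$ to the left of $s_{\max}=1-1/(N+1)$, so that the monotonicity argument is applicable. Everything else is substitution of already-established bounds.
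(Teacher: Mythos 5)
Your proposal is correct and follows essentially the same route as the paper: the deterministic bound \eqref{evI.9}, the reduction of the hypothesis $|E_{-+}^0|\le C\delta F_N(|\zeta_-|)$ to $m(|\zeta_-|)\le C\delta$ via $|F_{N+1}(\zeta_+/\zeta_-)|\asymp 1$ and the monotonicity of $m$ on $[0,s_{\max}]$, and then Lemma \ref{grpp3} together with $1-e^{-x}\le x$ and $|Z|\asymp F_N(|\zeta_-|)$ from \eqref{grpp.17}. No gaps.
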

\subsection{Estimates for $\det(P_{\delta} -z)$ in the exterior of $E_1$.} 
We just recall \eqref{grpp.12}: With probability $\geq 1 - \e^{-N^{2}}$, we 
have 
\begin{equation}\label{evI.18}
\ln |\det(P_{\delta}-z)| = N \ln|a| 
      + \ln \frac{|\zeta_+^{N+1} - \zeta_-^{N+1}|}{|\zeta_+- \zeta_-|}
     +\mO(\delta) N^{3/2}F_N(|\zeta_-|^{-1})
\end{equation}
for all $z$ satisfying \eqref{grpp.9}: 
\begin{equation}\label{evI.19}
 \delta N F_N \left(\frac{1}{|\zeta_-|}\right) \ll 1, 
\end{equation}
which is guaranteed for all $z$ in the exterior region by (\ref{grpp.2}).
Here we also used the formula \eqref{grunp.13}: 
\begin{equation*}
 \det (P-z) = (-a)^N\frac{\zeta_+^{N+1} - \zeta_-^{N+1}}{\zeta_+- \zeta_-}.
\end{equation*}
If $\delta N^2 \ll 1 $, then \eqref{evI.19} is satisfied in the whole exterior 
region. For larger values of $\delta$, \eqref{evI.19} says that 
\begin{equation*}
 \frac{\delta N}{1 - \frac{1}{|\zeta_-|}} \ll 1 ,
\end{equation*}
which means that $|\zeta_-| -1 \gg \delta N$. 
\subsection{Choice of parameters} 
We chose
\begin{equation}\label{evI.20}
 \delta \asymp N^{-\kappa}, \quad \kappa > \frac{5}{2},
\end{equation}
as in Theorem \ref{evI2}. Then \eqref{grpp.2} holds and by \eqref{evI.8} 
we have with probability $\geq 1 - \e^{-N^{2}}$ the upper bound 
\begin{equation}\label{evI.21}
 \ln|\det(P_{\delta} -z)| \leq N (\ln|a| + \mO(N^{-1})),
\end{equation}
for all $z$ in the interior of $E_1$, away from any fixed
neighborhood of the focal segment.
\par
Proposition \ref{evI1} is applicable for $|\zeta_-|\leq (C\delta)^{\frac{1}{N}}$ 
and hence for each $\zeta_-$ with 
\begin{equation}\label{evI.22}
 |\zeta_-| \leq \left(\frac{1}{\mO(1)}\right)^{\frac{1}{N}} N^{-\frac{\kappa}{N}} 
  = \e^{-\frac{\kappa}{N}(\ln N +\mO(1))}
\end{equation}
or equivalently 
\begin{equation}\label{evI.23}
 |\zeta_-| \leq 1 - \frac{\kappa}{N}(\ln N +\mO(1)),
\end{equation}
we have \eqref{evI.16}: 
\begin{equation}\label{evI.24}
 \ln|\det(P_{\delta}-z)| \geq N\ln|a| - \mO(1) +\ln t
\end{equation}
with probability 
\begin{equation}\label{evI.25}
 \geq 1 - \mO(1)N^{2\kappa} t^2 - \e^{-N^2}
\end{equation}
for $t\leq C^{-1}N^{-\kappa}$. Choose $t=\e^{-N^{\delta_0}}$ for a small 
fixed $\delta_0 >0$. Then, for each $\zeta_-$ satisfying \eqref{evI.23}, 
we have 
\begin{equation}\label{evI.26}
 \ln|\det(P_{\delta}-z)| \geq N(\ln|a| - 2N^{\delta_0 -1}),
\end{equation}
with probability 
\begin{equation}\label{evI.27}
 \geq 1 - \mO(1)N^{2\kappa} \e^{-2N^{\delta_0}}.
\end{equation}
\par
As for the exterior region we write 
\begin{equation*}
\begin{split}
 \ln \frac{|\zeta_+^{N+1}-\zeta_-^{N+1}|}{|\zeta_+ - \zeta_-|} &  = 
 N\ln|\zeta_-| + \ln\frac{|1 - (\zeta_+/\zeta_-)^{N+1}|}{|1 - \zeta_+/\zeta_-|} \\ 
 & = N\ln|\zeta_-| + \mO(1).
\end{split}
\end{equation*}
and from \eqref{evI.18},\eqref{evI.19} we get with probability $\geq 1 - \e^{-N^2}$, 
\begin{equation}\label{evI.28}
 \ln|\det(P_{\delta}-z)| = N(\ln|a| + \ln|\zeta_-| +\mO(N^{-1}))
\end{equation}
for all $z$ in any fixed bounded region with $|\zeta_-|\geq 1$. Put 
\begin{equation}\label{evI.29}
 \varphi(z) = \ln|a| +\max(\ln |\zeta_-|,0) + \frac{C}{N},
\end{equation}
for $C>0$ large enough. Then with probability $\geq 1 - \e^{-N^2}$ 
we have 
\begin{equation}\label{evI.30}
 \ln|\det(P_{\delta} - z)| \leq N\varphi(z),
\end{equation}
for all $z$ in any fixed compact subset of $\C$ which does not intersect 
the focal segment. 
\par
Moreover, 
\begin{equation}\label{evI.31}
 \ln|\det(P_{\delta}-z)| \geq N(\varphi(z) - \mO(N^{-1}))
\end{equation}
in the exterior region. For each $z$ with 
$|\zeta_-|\leq 1 -\frac{\kappa}{N}(\ln N +\mO(1))$, we have \eqref{evI.26} with 
probability as in \eqref{evI.27}: 
\begin{equation}\label{evI.32}
 \ln|\det(P_{\delta}-z)| \geq N(\varphi(z) -\varepsilon), 
 \quad 
 \varepsilon = \frac{2N^{\delta_0} }{N}.
\end{equation}
\par
Let $\gamma$ be a segment of $E_1$ and $\frac{C\ln N}{N}\leq r \ll 1$, put 
\begin{equation}\label{evI.33}
 \Gamma(r,\gamma) = \{
 z\in\C; ~
 \dist(z,E_1) < r, ~ \Pi(z) \in \gamma \}
\end{equation}
where $\Pi(z)\in E_1$ is the point in $E_1$ with $|\Pi(z)-z)| = \dist(z,E_1)$. 
We want to estimate the number of eigenvalues of $P_{\delta}$ in $\Gamma$. 
(With probability $\geq 1 - \e^{-N^2}$ we know that $P_{\delta}$ has no eigenvalues 
in the exterior region to $E_1$ and we are free to modify $\Gamma$ there. However, 
there seems to be no point to do so in the present situation.)
\\
\par
Choose $z_j^0\in\partial\Gamma$, $r_j=\max \left(
  \frac{1}{2}\dist(z^0_j,\gamma), 4C(\ln N)/N \right)$, $j=1,...,M$
such that:
 \begin{itemize}
\item $\partial \Gamma \subset \cup_{j=1}^M D(z_j^0,r_j/2)$,
\item $\mathrm{dist\,}(z_j^0,\gamma )\ge C(\ln N)/N$ for all $j$,
\item $\# \{j;\, r_j=4C(\ln N)/N \}={\mO}(1)$,  
\item $$
M\le {\mO}\left(\frac{1}{r} \right) +{\mO}(1)
\ln \left(\frac{r}{C(\ln N)/N} \right)
$$
 \end{itemize}
For any choice of $z_j\in D(z_j^0,r_j/4)$, (\ref{evI.32}) holds for
$z=z_1,...,z_M$ with probability
\begin{equation}\label{evI.34}
 \geq 1 - \mO(1)\left(\frac{1}{r} + \ln N\right)N^{2\kappa}\e^{-2N^{\delta_0}}. 
\end{equation}
Applying Theorem 1.2 in \cite{Sj09b}, we get
\begin{equation*}
 \begin{split}
  \bigg|
 \# (\sigma(P_{\delta})\cap \Gamma) &- \frac{N}{2\pi}\int_{\Gamma} \Delta\varphi L(dz)
 \bigg| \\
 &\leq \mO(N) \left(
 \sum_j \varepsilon 
 + \sum_{D(z_j^0,r_j)\cap E_1 \neq \emptyset}
   \int_{D(z_j^0,r_j)}\Delta \varphi L(dz) 
 \right),
 \end{split}
\end{equation*}
where $\varepsilon(z_j) = \frac{2 N^{\delta_0} }{N}$ is given in (\ref{evI.32}). Here 
\begin{equation*}
 \sum_{D(z_j^0,r_j)\cap E_1 \neq \emptyset}
   \int_{D(z_j^0,r_j)}\Delta \varphi L(dz) 
 = \mO(1) \frac{\ln N}{N},
\end{equation*}
and the number of points $z_j^0$ for which $D(z_j^0,r_j)\cap E_1\ne
\emptyset $ is ${\mO}(1)$,
so finally, with probability as in \eqref{evI.34},
\begin{equation}\label{evI.35}
 \begin{split}
  \bigg|
 \# &(\sigma(P_{\delta})\cap \Gamma) - \frac{N}{2\pi}\int_{\Gamma} \Delta\varphi L(dz)
 \bigg| \\
 &\leq \mO(N)\left(\left(\frac{1}{r}+\ln N \right)N^{\delta_0
     -1}+\frac{\ln N}{N} \right)
\le {\mO}(1)N^{\delta_0} \left(\frac{1}{r}+\ln N \right) .
 \end{split}
\end{equation}

\par The measure $\Delta _z\varphi (z)L(dz)$ is invariant under
holomorphic changes of coordinates and in particular, we can replace
$z$ by $\zeta _-$:
\begin{equation}\label{evI.36}
\Delta _z\varphi (z)L(dz)=\Delta _{\zeta _-}\varphi L(d\zeta _-).
\end{equation}
Here we recall that $\varphi $ is given by (\ref{evI.29}) and compute the
right hand side of (\ref{evI.36}). Let $\psi \in C_0^\infty ({\bf
  C}\setminus \{ 0 \})$ be a test function. Then in the sense of
distributions,
\[
\begin{split}
&\int \psi (\zeta _-)\Delta \varphi (\zeta _-)L(d\zeta _-)=\int \Delta
\psi (\zeta _-)\varphi (\zeta _-)L(d\zeta _-)\\
&=\int_{D(0,1)}\Delta \psi (\zeta _-)\varphi (\zeta _-)L(d\zeta
_-)+\int_{{\bf C}\setminus D(0,1)}\Delta \psi (\zeta _-)\varphi (\zeta _-)L(d\zeta _-),
\end{split}
\]
and by Green's formula, this is equal to
$$
\int_{S^1}\psi (\zeta _-)\left( \partial _n \varphi
  _\mathrm{ext}(\zeta _-)-\partial _n\varphi _\mathrm{int}(\zeta _-)
\right) |d\zeta _-|,
$$
where $n$ denotes the exterior unit normal to the unit disc,
$$
\begin{cases}
\varphi _\mathrm{int}=\ln |a| +C/N,\\
\varphi _\mathrm{ext}=\ln |a| +\ln |\zeta _-|+C/N.
\end{cases}
$$
Since $\partial _n\varphi _\mathrm{int}(\zeta _-)=0$, $\partial _n\varphi
_\mathrm{ext}(\zeta _-)=1$ on $S^1$, we get
$$
\int \psi (\zeta _-)\Delta \varphi (\zeta _-) L(d\zeta _-)=\int_{S^1}\psi
(\zeta _-) |d\zeta _-|,
$$
i.e.\
\begin{equation}\label{evI.37}
\Delta \varphi (\zeta _-)L(d\zeta _-)=L_{S^1}(ds),\\ \hbox{the length
  measure on }S^1.
\end{equation} 
Recall that $\Gamma \cap E_1=\gamma $. Letting $\gamma $ also denote
the corresponding arc in $S^1_{\zeta _-}$, depending on the context,
we see that
$$
\int_{\Gamma }\Delta \varphi L(dz)=\int_{\gamma }L_{S^1}(d\zeta
_-)
$$
i.e.\ the length of $\gamma $ with respect to the $\zeta
_-$-coordinates. To make the connection with Weyl's formula, we write
$\zeta _-=e^{i\xi }$, so that $S^1_{\zeta _-}$ corresponds to $\xi \in
{\bf R}/2\pi {\bf Z} $. Then $\gamma \subset S^1$ can be identified with $\{
\xi \in {\bf R}/2\pi {\bf Z};\, e^{i\xi }\in \gamma  \}$. Viewing again $\gamma $ as a segment
in $E_1$, we get
\begin{equation}\label{evI.38}
\int_{\Gamma }\Delta \varphi L(dz)=\mathrm{length\,}\left(\{ \xi \in {\bf
    R}/2\pi {\bf Z};\, P_\mathrm{I}(\xi )\in \gamma \} \right),
\end{equation}
where the right hand side does not change if we replace $\gamma $ by
$\Gamma $
and where $P_\mathrm{I}$ is the symbol given in (\ref{int.5}), or equivalently
\begin{equation}\label{evI.39}
\int_{\Gamma }\Delta \varphi L(dz)=\mathrm{vol}_{S^1_\xi }P_\mathrm{I}^{-1}(\Gamma ).
\end{equation}
If we view $P_\mathrm{I}$ as a function on $]0,N]_x\times S^1_\xi $,
where $]0,N]=\{ x\in {\bf R};\, 0<x\le N \}$, then
\begin{equation}\label{evI.40}
N\int_{\Gamma }\Delta \varphi L(dz)=\mathrm{vol}_{]0,N]\times
  S^1}P_\mathrm{I}^{-1}(\Gamma )
\end{equation}
Using this in \eqref{evI.35}, we get Theorem \ref{evI2}.
%
%
\%bibliography{bibliography}
\providecommand{\bysame}{\leavevmode\hbox to3em{\hrulefill}\thinspace}
\providecommand{\MR}{\relax\ifhmode\unskip\space\fi MR }
\providecommand{\MRhref}[2]{%
  \href{http://www.ams.org/mathscinet-getitem?mr=#1}{#2}
}
\providecommand{\href}[2]{#2}


\begin{thebibliography}{10}

\bibitem{BM}
W.~Bordeaux-Montrieux, \emph{{Loi de Weyl presque s{\^u}re et r{\'e}solvent
  pour des op{\'e}rateurs diff{\'e}rentiels non-autoadjoints, Th{\'e}se}},
  pastel.archives-ouvertes.fr/docs/00/50/12/81/PDF/manuscrit.pdf (2008).

\bibitem{BoSi99}
A.~B\"ottcher and B.~Silbermann, \emph{Introduction to large truncated Toeplitz
  matrices}, Springer, 1999.

\bibitem{ZwChrist10}
T.J. Christiansen and M.~Zworski, \emph{{Probabilistic Weyl Laws for Quantised
  Tori}}, Communications in Mathematical Physics \textbf{299} (2010).

\bibitem{Da97}
E.~B. Davies, \emph{{Pseudospectra of Differential Operators}}, J. Oper. Th
  \textbf{43} (1997), 243--262.

\bibitem{Da99}
E.B. Davies, \emph{{Pseudo{\textendash}spectra, the harmonic oscillator and
  complex resonances}}, Proc. of the Royal Soc.of London A \textbf{455} (1999),
  no.~1982, 585--599.

\bibitem{DaHa09}
E.B. Davies and M.~Hager, \emph{{Perturbations of Jordan matrices}}, J. Approx.
  Theory \textbf{156} (2009), no.~1, 82--94.

\bibitem{NSjZw04}
N.~Dencker, J.~Sj{\"o}strand, and M.~Zworski, \emph{{Pseudospectra of
  semiclassical (pseudo-) differential operators}}, Communications on Pure and
  Applied Mathematics \textbf{57} (2004), no.~3, 384--415.

\bibitem{TrEm05}
M.~Embree and L.~N. Trefethen, \emph{{Spectra and Pseudospectra: The Behavior
  of Nonnormal Matrices and Operators}}, Princeton University Press, 2005.

\bibitem{GoKh00}
I.Y. Goldsheid and B.A. Khoruzhenko, \emph{Eigenvalue curves of asymmetric
  tridiagonal random matrices}, Elec. J. of Probability. \textbf{5} (2000),
  no.~16, 1--28.

\bibitem{GuMaZe14}
A.~Guionnet, P.~Matchett Wood, and {0. Zeitouni}, \emph{{Convergence of the
  spectral measure of non-normal matrices}}, Proc.~AMS \textbf{142} (2014),
  no.~2, 667--679.

\bibitem{Ha06b}
M.~Hager, \emph{{Instabilit{\'e} Spectrale Semiclassique d{\rq}Op{\'e}rateurs
  Non-Autoadjoints II}}, Annales Henri Poincare \textbf{7} (2006), 1035--1064.

\bibitem{Ha06}
\bysame, \emph{{Instabilit{\'e} spectrale semiclassique pour des op{\'e}rateurs
  non-autoadjoints I: un mod{\`e}le}}, Annales de la facult{\'e} des sciences
  de Toulouse S{\'e}. 6 \textbf{15} (2006), no.~2, 243--280.

\bibitem{HaSj08}
M.~Hager and J.~Sj{\"o}strand, \emph{{Eigenvalue asymptotics for randomly
  perturbed non-selfadjoint operators}}, Mathematische Annalen \textbf{342}
  (2008), 177--243.

\bibitem{HaNe96}
N.~Hatano and D.R. Nelson, \emph{Localization transitions in non-hermitian
  quantum mechanics}, Physical Review Letters \textbf{77} (1996), 570--573.

\bibitem{Sj15}
J.~Sj{\"o}strand, \emph{{Non-self-adjoint differential operators, spectral
  asymptotics and random perturbations }}, Book in preparation.

\bibitem{SjAX1002}
\bysame, \emph{{Spectral properties of non-self-adjoint operators}}, Actes des
  Journ{\'e}es d'{\'e}.d.p. d'{\'E}vian (2009).

\bibitem{SjVo15}
J.~Sj{\"o}strand and M.~Vogel, \emph{{Interior eigenvalue density of Jordan
  matrices with random perturbations}},  (2015), accepted for publication as
  part of a book in honour of Mikael Passare in the series Trends in
  Mathematics, Springer/Birkh{\"a}user, e-preprint [arxiv:1412.2230].

\bibitem{Sj09b}
Johannes Sj\"ostrand, \emph{Counting zeros of holomorphic functions of
  exponential growth}, Journal of pseudodifferential operators and applications
  \textbf{1} (2010), no.~1, 75--100.

\bibitem{Tr97}
L.N. Trefethen, \emph{{Pseudospectra of linear operators}}, SIAM Rev.
  \textbf{39} (1997), no.~3, 383--406.

\bibitem{Vo14b}
M.~Vogel, \emph{{Eigenvalue interaction for a class of non-selfadjoint
  operators under random perturbations}},  (2014), e-preprint
  [arxiv:1412.0414].

\bibitem{Vo14}
\bysame, \emph{{The precise shape of the eigenvalue intensity for a class of
  non-selfadjoint operators under random perturbations}},  (2014), submitted,
  e-preprint [arXiv:1401.8134].

\bibitem{Wi94}
H.~Widom, \emph{Eigenvalue distribution for nonselfadjoint Toeplitz matrices},
  Operator Theory: Advances and Applications \textbf{71} (1994), no.~1--8.

\end{thebibliography}
\end{document}